\newtheorem{mainthm}{Theorem}
\newcommand{\lora}{\longrightarrow}
\newtheorem{theorem}{Theorem}[section]
\newtheorem*{thm}{Theorem}
\newtheorem{lemma}[theorem]{Lemma}
\newtheorem{proposition}[theorem]{Proposition}
\newtheorem{corollary}[theorem]{Corollary}
\newtheorem{definition}[theorem]{Definition}
\theoremstyle{definition}
\newtheorem{remark}[theorem]{Remark}
\newtheorem{example}[theorem]{Example}
\newtheorem{question}[theorem]{Question}
\newcommand\C{\mathbb C}
\newcommand\R{\mathbb R}
\newcommand\N{\mathbb N}
\newcommand\Z{\mathbb Z}
\newcommand\E{\mathbb E}
\renewcommand\P{\mathbb P}
\newcommand{\ler}[1]{\left( #1 \right)} 
\newcommand{\lers}[1]{\left\{ #1 \right\}} 
\newcommand{\lesq}[1]{\left[ #1 \right]}
\newcommand{\abs}[1]{\left| #1 \right|} 
\newcommand{\norm}[1]{\left\lVert#1\right\rVert}
\title{On the inclusion of bounded harmonic functions\\of random walks}
\author{
  Yair Hartman
  \thanks{Ben-Gurion University of the Negev; e-mail address: hartmany@bgu.ac.il}%
  \and
  Aranka Hrušková
  \thanks{Weizmann Institute of Science; e-mail address: umim.cist@gmail.com}%
  \and
Omer Segev
  \thanks{Technion – Israel Institute of Technology; e-mail address: omer.segev@campus.technion.ac.il}%
}
\date{January 2026}
\begin{document}

\maketitle

\begin{abstract}
    We investigate the conditions under which the space of bounded harmonic functions of a probability measure $\mu$ on a group $G$ is contained in that of another measure $\theta$. We establish that asymptotic commutativity, defined by the condition $\|\mu^{*t}*\theta - \theta*\mu^{*t}\|_{TV} \to 0$ as $t \to \infty$, is sufficient to guarantee the inclusion $H^\infty(G, \mu) \subseteq H^\infty(G, \theta)$, provided $\theta$ is absolutely continuous with respect to a convex combination of convolution powers of $\mu$. By employing martingale convergence techniques rather than ergodic-theoretic arguments, we demonstrate that this result holds without topological assumptions on $G$ (such as local compactness) and extends to general Markov chains. Furthermore, utilizing hitting models for the Poisson boundary, we characterise the inclusion $H^\infty(G, \mu) \subseteq H^\infty(G, \theta)$ as equivalent to the asymptotic invariance of $\theta$ under $\mu$ in the weak* topology. We apply these results to provide a probabilistic proof of the Choquet-Deny theorem for nilpotent groups, among other applications.
\end{abstract}

\section{Introduction}
Let $\mu$ be a probability measure on a group $G$. An integrable function $h\colon G\rightarrow \mathbb{C}$ is called \emph{$\mu$-harmonic} if $h(g)=\int_G h(g\gamma) \, d\mu(\gamma)$  for all $g\in G$. Denote by ${H}^\infty(G,\mu)$  the collection of all bounded $\mu$-harmonic functions on $G$.

In the study of bounded harmonic functions on groups, there are, generally speaking, two schools of thought. On one hand, there is the \textit{probabilistic approach}, which studies individual trajectories of random walks and their asymptotic behaviour, whose stabilisation is guaranteed by the martingale convergence theorem. On the other hand, we have the \textit{dynamical, global approach}, rooted in ergodic theory, where the primary object of study is the underlying group. In this perspective, one is concerned with stationary actions, operator algebras, and understanding the Poisson boundary (sometimes referred to as the \emph{Furstenberg-Poisson boundary}). The latter is a probability $G$-space $(X,\nu)$ which provides a presentation of bounded harmonic functions via an isomorphism between $L^\infty(X,\nu)$ and ${H}^\infty(G,\mu)$.

This latter field, established by the work of Furstenberg~\cite{furstenberg1963noncommuting,Furst71,furstenberg1973boundary}, Kaimanovich and Vershik~\cite{KaimVershik}, and many others, uses bounded harmonic functions as a tool for studying groups. For example, the lack of non-constant bounded harmonic functions for all generating measures $\mu$ is equivalent to the underlying countable group being FC-hypercentral
(virtually nilpotent, for fin. gen. groups~\cite{Annals2019}). The same condition for \textit{some} generating $\mu$ is equivalent to the group being
amenable (Rosenblatt~\cite{rosenblatt1981ergodic}, Kaimanovich-Vershik~\cite{KaimVershik}). As a corollary of our Theorem~\ref{thm:mainA}, we add another proof to the known list of proofs stating that nilpotent groups admit only constant bounded harmonic functions. Interestingly, even this result can be approached via group actions (Margulis~\cite{margulis1966positive}, Theorem 3.3 in Furstenberg's~\cite{furstenberg1998stiffness}), or via probabilistic study (Blackwell~\cite{blackwell1955transient}, Choquet-Deny~\cite{cd1960}, and our new proof below).

Each of the two perspectives brings its own tools and characteristics, and their combination yields the rich and deep theory of bounded harmonic functions on groups. In this paper, we employ both approaches to study the basic question: when do two random-walk measures on a group share the same collection of bounded harmonic functions, or, more generally, when is one a subset of the other? 

In this paper, we provide two answers to this question.
We mention that independently, Frisch and Chawla use inclusions of sets of bounded harmonic functions in their paper~\cite{2025nonrealizability}, showing that not all the Poisson boundaries of random walks on $\mathbb{F}_2$ are realised on the Gromov boundary.

Our investigation is motivated by a recurring heuristic in the theory: commutativity often acts as a rigidity mechanism for harmonic structures. For instance, the resolution of the Choquet-Deny problem identifies commutativity (modulo finite index and induction) as the only obstruction to the existence of non-trivial bounded harmonic functions~\cite{Annals2019}. Similarly, the commuting structure of the factors in the case of a product group forces the Poisson boundary to also decompose as a product~\cite{BaderShalom2006}. Another manifestation is found in the procedure of ``spreading out" a measure: taking a convex combination of convolution powers yields a new measure that commutes with the original one and leaves the space of bounded harmonic functions intact. Finally, this rigidity appears in the Poisson boundary itself, which admits no non-trivial $G$-equivariant endomorphisms. That is, there are no non-trivial maps that commute with the action~\cite{furstenberg2009stationary}. This statement holds whether one considers measurable point maps or unital positive maps on the algebra of bounded harmonic functions~\cite{hartman2024tight}. These examples suggest that measures commuting with that of a random walk tend to respect its harmonic functions, prompting us to look for a condition based on commutativity. Indeed, in Section~\ref{sec:applications}, we recover some of these known results as corollaries of Theorem~\ref{thm:mainA}.

Our main result is a sufficient condition for the inclusion of bounded harmonic functions in terms of \textit{asymptotic commutativity}.

\begin{mainthm}\label{thm:mainA}(Theorem~\ref{thm:asmpt-commuting}) 
    Let $G$ be a group equipped with a $G$-invariant $\sigma$-algebra $\Sigma$ and $\mu$, $\theta$ be two probability measures on $(G,\Sigma)$ such that $\theta\ll\sum_{n\geq0}\frac{1}{2^{n+1}}\mu^{*n}$ and
    \[
    \norm{\theta*\mu^{*t}-\mu^{*t}*\theta}_{TV}\to0 \text{ as } t\to\infty,
    \]
    where $*$ stands for convolution and norm is the total-variation norm. 

    Then ${H}^\infty (G,\mu)\subseteq {H}^\infty (G,\theta)$.
\end{mainthm}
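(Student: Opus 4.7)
The aim is to show that every $h \in H^\infty(G, \mu)$ is also $\theta$-harmonic, i.e., that $(h * \theta)(g) = h(g)$ for every $g \in G$. I plan to proceed in three stages: an algebraic identity extracted from asymptotic commutativity, a reduction to a covariance-decay claim using the absolute-continuity hypothesis, and a martingale-convergence argument for that claim.

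First, I would exploit the $\mu$-harmonicity of $h$ together with associativity of convolution. For each $t \geq 0$ and every $y \in G$, applying $\mu$-harmonicity of $h$ at $gy$ gives $\int h(gy\alpha)\,d\mu^{*t}(\alpha) = h(gy)$, so integrating against $\theta$ and applying Fubini produce $(h * (\theta * \mu^{*t}))(g) = \int h(gy)\,d\theta(y) = (h*\theta)(g)$. Combined with the standard bound $\abs{(h*\lambda_1)(g) - (h*\lambda_2)(g)} \leq \norm{h}_\infty \norm{\lambda_1 - \lambda_2}_{TV}$ and the asymptotic-commutativity hypothesis, this yields
\[
(h*\theta)(g) \;=\; \lim_{t\to\infty} (h*(\mu^{*t}*\theta))(g). \qquad (\star)
\]

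Second, writing $\sigma := \sum_{n \ge 0} 2^{-n-1}\mu^{*n}$ and $f := d\theta/d\sigma$, and introducing independent random variables $S_t \sim \mu^{*t}$ and $Y_n \sim \mu^{*n}$, Fubini expands
\[
(h*(\mu^{*t}*\theta))(g) \;=\; \sum_{n \ge 0} \frac{1}{2^{n+1}} \, \E[h(gS_tY_n)\,f(Y_n)].
\]
Since $S_tY_n \sim \mu^{*(t+n)}$, the identity $\E[h(gS_tY_n)] = h(g)$ holds for every $t, n$. Combined with $\sum_n 2^{-n-1} \E[f(Y_n)] = \int f\,d\sigma = 1$, the theorem reduces to proving the covariance-decay estimate
\[
\mathrm{Cov}\bigl(h(gS_tY_n),\, f(Y_n)\bigr) \;\longrightarrow\; 0 \qquad (t \to \infty) \qquad (\dagger)
\]
for each fixed $n$: summing $(\dagger)$ against the weights $2^{-n-1}$ then gives $\lim_t (h*(\mu^{*t}*\theta))(g) = h(g)$, which by $(\star)$ equals $(h*\theta)(g)$.

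Third, establishing $(\dagger)$ is the analytic heart of the proof and the principal obstacle. Setting $\psi_t(y) := \E[h(gS_ty)]$, the centring identity $\int (\psi_t(y) - h(g))\,d\mu^{*n}(y) = 0$ together with Cauchy--Schwarz (after a truncation of $f$ to render it $L^2(\mu^{*n})$-integrable) reduces $(\dagger)$ to the variance-decay $\mathrm{Var}_{\mu^{*n}}(\psi_t) \to 0$ as $t\to\infty$. Unpacking, $\mathrm{Var}_{\mu^{*n}}(\psi_t) = \E[h(gS_tY_n)\,h(gS'_tY_n)] - h(g)^2$ for an independent copy $S'_t$ of $S_t$; one hopes to control this by viewing $(gS_tY_n, gS'_tY_n)$ as the endpoints of two length-$(t+n)$ $\mu$-walks sharing their final $n$-block, then exploiting the $L^2$-stabilisation of the bounded $\mu$-martingale along each walk together with the asymptotic commutativity to neutralise the shared block. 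The key subtlety is that for fixed $y$, the sequence $t \mapsto h(gS_ty)$ is \emph{not} itself a $\mu$-martingale (right multiplications need not commute with $\mu$-steps), so the required variance decay cannot be obtained from single-walk martingale convergence alone; the argument must genuinely exploit both walks' joint behaviour together with a quantitative use of the asymptotic commutativity.
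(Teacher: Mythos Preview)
Your Stages 1 and 2 are correct and coincide with the paper's setup: in its notation the three claims are $\E[M_t]=\int h(gy)\,d\theta(y)$, $|\E[M_t]-\E[N_t]|\to 0$, and $\E[N_t]\to h(g)$, and your $(\star)$ packages the first two. The genuine gap is Stage 3. Your variance identity is right, and by writing $\mathrm{Var}_{\mu^{*n}}(\psi_t)=\E\bigl[(h(gS_tY_n)-h(gS_t))\,(h(gS'_tY_n)-h(gS'_t))\bigr]$ and applying Cauchy--Schwarz you see that the two-walk formulation collapses back to the single quantity $\E[|h(gS_tY_n)-h(gS_t)|]\to 0$. So the detour through a second copy $S'_t$ buys nothing. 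More importantly, your proposal to ``neutralise the shared block'' via asymptotic commutativity is a red herring: at this point $\theta$ has already been absorbed into the density $f$ and no longer appears, while $Y_n\sim\mu^{*n}$ commutes with $\mu^{*t}$ trivially. In the paper's proof the asymptotic-commutativity hypothesis is used \emph{only} in your Stage 1; claim 2 is established without it.

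What is actually needed is a pathwise lemma: for $\nu_\mu\times\mu^{*n}$-almost every $\bigl((w_t)_t,\gamma\bigr)$, one has $|h(gw_t\gamma)-h(gw_t)|\to 0$. This is the paper's Proposition~3.2, and its proof is not a martingale or $L^2$ argument but a combinatorial one. Suppose the conclusion fails on a set of $(\nu_\mu\times\mu^{*n})$-measure $\delta>0$, so along those pairs the jump $|h(gw_t\gamma)-h(gw_t)|>1/K$ occurs for infinitely many $t$. Martingale convergence gives a time $T$ such that a proportion $>1-\delta$ of walks are $\tfrac1K$-Cauchy from $T$ onward. Now partition the bad set by the \emph{first} time $T+i$ at which the jump exceeds $1/K$, and observe that because $\gamma\sim\mu^{*n}$, the walk itself had at least this probability of \emph{actually} moving from $w_{T+i}$ to $w_{T+i}\gamma$ over its next $n$ steps. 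This manufactures pairwise disjoint events (indexed by $i$) of walks that break $\tfrac1K$-Cauchyness after time $T$, with total measure $\ge\delta$, contradicting the choice of $T$. This ``if it has enough chance of happening, it actually does happen'' step is the missing idea; once you have it, bounded convergence finishes Stage 3 directly, with no need for the variance or two-walk machinery.
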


    In particular, if the measures actually commute ($\mu*\theta=\theta*\mu$) and are both \textit{generating}, then they share exactly the same bounded harmonic functions. In the case of a locally compact second countable group $G$, and Borel admissible probability measures $\mu$ and $\theta$ on $G$, it is shown in~\cite{BaderShalom2006} and~\cite{BjHaOppel2023} that if $\mu*\theta=\theta*\mu$ then every $(G,\mu)$-stationary action is also $(G,\theta)$-stationary, and vice versa. It can be shown using the Poisson boundary theory that in this case this statement is equivalent to the statement that $H^\infty(G,\mu)=H^\infty(G,\theta)$. However,    
    our proof is based on a careful treatment of martingales and not on the approach of group actions. This allows us to consider non-generating measures $\theta$, and using them, to conclude the aforementioned Choquet-Deny theorem for nilpotent groups.
    The main technical lemma states that asymptotically, the values of a bounded harmonic function observed by a random walk do not change if one adds an additional, constant step \textit{at the end} of the walk (Proposition~\ref{prop:theBasicLemma}).
    
    Thanks to our proof method, we obtain the result for the generality of asymptotic commutativity and with no restrictions on the group; it does not require local compactness or second countability. Furthermore, as often happens with martingale arguments, when treated carefully, the result actually holds for general Markov chains, after making the correct adaptations to the statement. We discuss these and the more general proof in Subsection~\ref{subsection:MarkovCh}.

While asymptotic commutativity is a sufficient condition for inclusion, it is not a necessary one -- see Example~\ref{example}. The correct if-and-only-if condition is ``$\mu$ being asymptotically $\theta$-invariant''. But in what sense?
We bring here a classical result of Derriennic, Theorem~\ref{thm:Derriennic}, which shows that this asymptotic invariance can be taken in the total-variation norm -- \textit{(2)} in Theorem~\ref{thm:mainB}. We translate Derriennic's proof, which holds for general Markov chains, from French in the appendix, and add a dynamical characterisation, in which the asymptotic invariance can be taken in the weak* sense -- \textit{(3)} in Theorem~\ref{thm:mainB}. For that, one should be careful about the underlying topology. We show in Lemma~\ref{lemma:hitting}  that one can use weak* topology whenever  Poisson boundary is realized as a ``hitting measure'' of the random walk. Furthermore, we show in Section~\ref{Sec:hitting} that such models exist for every admissible measure on a locally compact second countable group.
This last result, showing that $\mu$-boundaries can be realised as hitting measures, might be of independent interest for stationary theory.

This discussion is summarised by the following theorem.

\begin{mainthm}\label{thm:mainB}
(Theorem~\ref{thm:TFAE}) 
    Let $G$ be a locally compact second countable group and let $\mu$ be an admissible Borel probability measure on $G$. Let $\theta$ be a Borel probability measure on $G$.
    Consider a hitting model $G \cup K$ for the Poisson boundary of $(G,\mu)$.

    Then the following are equivalent.

    \begin{enumerate}[label=(\arabic*)]
        \item Every bounded $\mu$-harmonic function is $\theta$-harmonic: $H^{\infty}(G,\mu)\subseteq H^\infty(G,\theta).$
        \item  $\|\theta*\tilde{\mu}^{*t}-\tilde{\mu}^{*t}\|_{TV} \to 0$ where TV is the total-variation norm and $\tilde{\mu}=\frac{1}{2}(\mu+\delta_e)$.
        \item  $\theta*\mu^{*t}-\mu^{*t}\to 0$ in the weak* topology on $Prob(G\cup K)$. 
    \end{enumerate}
\end{mainthm}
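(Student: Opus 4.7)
The plan is to decompose the TFAE statement into two independent equivalences: $(1) \Leftrightarrow (2)$, supplied by Derriennic's classical theorem (Theorem~\ref{thm:Derriennic}), and $(1) \Leftrightarrow (3)$, supplied by the hitting-model analysis (Lemma~\ref{lemma:hitting}). Since $\mu$ is admissible on a locally compact second countable group, the existence of a hitting model $G\cup K$ is guaranteed by the results of Section~\ref{Sec:hitting}, so this decomposition is well-posed.

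For the first equivalence I would simply invoke Theorem~\ref{thm:Derriennic}: the inclusion $H^\infty(G,\mu)\subseteq H^\infty(G,\theta)$ is characterised by the TV-asymptotic invariance of $\tilde\mu^{*t}$ under convolution by $\theta$. The laziness $\tilde\mu=\frac{1}{2}(\mu+\delta_e)$ is there to defeat possible periodicities: without it, $\mu^{*t}$ itself might fail to TV-converge (for instance when the support of $\mu$ lies in a non-trivial coset of a proper subgroup) even though the harmonic inclusion holds.

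For the second equivalence the strategy is to read both sides against the Poisson representation. Let $\nu$ denote the hitting distribution on $K$, so that each $h\in H^\infty(G,\mu)$ has the form $h(g)=\int \hat h(gk)\,d\nu(k)$ for some $\hat h\in L^\infty(K,\nu)$; a direct unfolding of the $\theta$-harmonicity equation $h(g)=\int h(g\gamma)\,d\theta(\gamma)$ then shows that $(1)$ is equivalent to the single identity $\theta*\nu=\nu$ on $K$. On the other hand, the defining property of a hitting model gives $\mu^{*t}\to\nu$ in weak* on $\Prob(G\cup K)$, and combined with weak*-continuity of $\sigma\mapsto \theta*\sigma$ on $\Prob(G\cup K)$ this yields $\theta*\mu^{*t}\to \theta*\nu$; subtracting shows that $(3)$ is equivalent to $\theta*\nu=\nu$ as well, closing the equivalence.

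The main technical obstacle is this weak*-continuity of $\sigma\mapsto \theta*\sigma$ on $\Prob(G\cup K)$. A priori the $G$-action on the compactification need not extend continuously to the boundary $K$, and without some regularity one cannot pass from $\mu^{*t}\to\nu$ to $\theta*\mu^{*t}\to\theta*\nu$ purely from weak* convergence. This is precisely what Lemma~\ref{lemma:hitting} is designed to handle: it singles out the hitting-model property as the correct hypothesis under which a dominated-convergence argument -- integrating the pointwise limits $\int f(\gamma\,\cdot)\,d\mu^{*t}\to \int f(\gamma\,\cdot)\,d\nu$ against $\theta$ for $f\in C(G\cup K)$ -- can be pushed through. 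Once this lemma is in place, the remainder of the proof of Theorem~\ref{thm:TFAE} is routine bookkeeping around the two cited ingredients.
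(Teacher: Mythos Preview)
Your decomposition is exactly the paper's: $(1)\Leftrightarrow(2)$ via Derriennic (through Corollary~\ref{cor:der}) and $(1)\Leftrightarrow(3)$ via the hitting-model argument (Proposition~\ref{prop:iff} together with Lemma~\ref{lemma:hitting}), and your $(1)\Leftrightarrow(3)$ sketch---reduce both sides to $\theta*\nu=\nu$ on $K$ using $\mu^{*t}\to\nu$ and continuity of left convolution by $\theta$---is precisely the content of Proposition~\ref{prop:iff}.

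Two small corrections of attribution. First, Lemma~\ref{lemma:hitting} is purely an \emph{existence} statement: it builds the hitting model $G\cup K$ via the measure topology, and says nothing about weak*-continuity of $\sigma\mapsto\theta*\sigma$. That continuity is obtained inside Proposition~\ref{prop:iff}, not by a dominated-convergence argument against $C(G\cup K)$ but by the probabilistic route: $X_t\to X_\infty$ almost surely in $G\cup K$, hence $YX_t\to YX_\infty$ almost surely (the $G$-action being continuous by construction of the measure topology), and almost-sure convergence implies convergence in distribution, i.e.\ $\theta*\mu^{*t}\to\theta*\nu$ weak*. Second, your invocation of Theorem~\ref{thm:Derriennic} for $(1)\Leftrightarrow(2)$ skips a genuine step: Derriennic's theorem as stated yields only the \emph{Ces\`aro} criterion $\tfrac{1}{n}\bigl\|\sum_{i=1}^n(\theta*\mu^{*i}-\mu^{*i})\bigr\|_{TV}\to 0$ (Proposition~\ref{iff, Cesàro}); the upgrade to $\|\theta*\tilde\mu^{*t}-\tilde\mu^{*t}\|_{TV}\to 0$ requires the additional input that for a lazy measure every bounded parabolic function is harmonic, so that $D^1(P)=H(P)\cap S^*$ and the two expressions in Theorem~\ref{thm:Derriennic} coincide. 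Your remark that laziness ``defeats periodicities'' is the right intuition, but the formal mechanism is this collapse of parabolic to harmonic.
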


This is another manifestation of the differences between the two approaches to studying bounded harmonic functions. The proof of the ergodic characterisation is softer but is restricted only to random walks on groups, whereas the probabilistic one applies to general Markov chains. Note that in Theorem~\ref{thm:mainA} we allow an arbitrary $G$-invariant $\sigma$-algebra, whereas in Theorem~\ref{thm:mainB} we restrict to the Borel $\sigma$-algebra, since we address ergodic considerations.
But even if it was not for this greater generality of the setup, Theorem~\ref{thm:mainA} does not follow at once from Theorem~\ref{thm:mainB}, and we are as of now only able to show that $\|\theta*\tilde{\mu}^{*t}-\tilde{\mu}^{*t}\|_{TV} \to 0$ implies $\norm{\theta*\mu^{*t}-\mu^{*t}*\theta}_{TV}\to0$ by combining the two main theorems.

We remark that it is essential to consider \emph{bounded} harmonic functions throughout the discussion. While the martingale convergence theorem holds for positive functions too, even abelian groups - on which any two measures commute - admit random walks with non-constant positive harmonic functions, and generally without inclusions between the corresponding spaces of positive harmonic functions, see for example~\cite{SawyerMartinBoundary}.

\addvspace{20pt}

\textbf{Acknowledgements.}
We would like to thank Tomer Zimhoni for many helpful discussions. YH also thanks Katok Center for Dynamical Systems and Geometry of Pennsylvania State University for their hospitality.
This research was supported by the European Research Council (ERC) under the European Union's Horizon Europe research and innovation programme (Grant agreement No. 101078193).

\section{Background}\label{background}

Let us imagine that there exists an object, a sort of boundary, which almost every $\mu$-random walk on $G$ eventually `hits'. If this object happened to be a measurable space, we could consider an event $E$ and for every element $g\in G$ ask what the probability $\P_g(E)$ of hitting the boundary at $E$ after starting the random walk from $g$ is. Then $\P_g(E)$ would be a (bounded) harmonic function simply by the law of total probability. More generally, we could take a bounded function $F$ on the boundary and ask for the expectation $\E_g[F]$ of the value of $F$ at which our random walk started from $g$ hits it.

The following is the foundational theorem of the boundary theory of bounded harmonic functions on groups, which proves
that such an object indeed exists and that remarkably \emph{all} bounded harmonic functions arise in the way described above.

\begin{theorem}[Furstenberg, \cite{Furst71}]\label{thm:Furstenberg}
    Let $G$ be a second countable, locally compact group and $\mu$ a Borel probability measure on $G$.
    Then there exists a $\mu$-stationary probability $G$-space $(B,\nu)$ such that
    \[
    H^\infty(G,\mu)\cong L^\infty(B,\nu),
    \]
    where the isomorphism is that of normed $G$-spaces.
\end{theorem}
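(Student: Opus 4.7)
The plan is to realise $(B,\nu)$ concretely as (a point model of) the tail of the $\mu$-random walk and to read off the isomorphism from the martingale convergence theorem. Work on the path space $\Omega=G^{\mathbb{N}}$ equipped with the Borel product structure and the probability measure $\P$ under which the coordinates $\xi_1,\xi_2,\ldots$ are i.i.d.\ of law $\mu$; set $g_n=\xi_1\cdots\xi_n$. Let $\mathcal{F}_n=\sigma(\xi_1,\ldots,\xi_n)$ and let $\mathcal{T}$ be the tail $\sigma$-algebra generated by $\{g_{n+k}\}_{k\geq 0}$ as $n\to\infty$. Using second countability of $G$, I would produce a standard Borel point model of the measure algebra of $(\Omega,\mathcal{T},\P)$, i.e.\ a standard Borel probability space $(B,\nu)$ together with a measurable map $\mathbf{bnd}\colon\Omega\to B$ whose preimage $\sigma$-algebra generates $\mathcal{T}$ modulo null sets.

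First I would produce a map $\Phi\colon H^\infty(G,\mu)\to L^\infty(B,\nu)$. For $h\in H^\infty(G,\mu)$, harmonicity makes $M_n=h(g_n)$ a bounded $(\mathcal{F}_n)$-martingale, so Doob's theorem supplies an a.s.\ limit $\hat h\in L^\infty(\Omega,\mathcal{T},\P)$, which descends through $\mathbf{bnd}$ to a bounded $\Phi(h)$ on $B$. Next, to recover all of $L^\infty(B,\nu)$ and build a candidate inverse, set
\[
h_F(g)\;=\;\E_g\bigl[F\circ\mathbf{bnd}\bigr]
\]
for $F\in L^\infty(B,\nu)$, where $\P_g$ denotes the law of the walk started at $g$; $h_F$ is bounded and $\mu$-harmonic by conditioning on the first step. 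That $F\mapsto h_F$ inverts $\Phi$ would then follow from L\'evy's upward theorem applied to $F\circ\mathbf{bnd}$ with respect to $(\mathcal{F}_n)$: the conditional expectations converge almost surely to $F\circ\mathbf{bnd}$ and, by the Markov property together with the compatibility of $\mathbf{bnd}$ with shifts of the walk, they equal $h_F(g_n)$; hence $\widehat{h_F}=F\circ\mathbf{bnd}$. Isometry of $\Phi$ then drops out of the identity $h(g)=\E_g[\hat h]$, and stationarity $\nu=\int g\nu\,d\mu(g)$ is the first-step decomposition of the exit distribution.

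The main technical obstacle is making sense of the $G$-action on $(B,\nu)$: $\mathbf{bnd}$ sees only the tail, so left-translation of paths acts trivially at the level of $\mathcal{T}$, and the action has to be extracted from the family of exit distributions $\{\mathbf{bnd}_*\P_g\}_{g\in G}$. Producing a genuine Borel $G$-action on $B$ such that $\mathbf{bnd}_*\P_g=g\nu$ and such that $(g,b)\mapsto gb$ is jointly Borel is where local compactness and second countability of $G$ really come in, through a point realisation of the induced action on the measure algebra. Once the action is in hand, equivariance of $\Phi$ reduces to the identity $\widehat{L_g h}(\omega)=\hat h(g\cdot\omega)$ at the level of martingale limits, and the theorem follows.
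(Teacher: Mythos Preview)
The paper does not prove this theorem; it is stated in Section~\ref{background} as background and attributed to Furstenberg, so there is no proof in the paper to compare your proposal against. Your outline is the standard path-space construction.

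There is, however, a gap in the choice of $\sigma$-algebra. You work with the tail $\mathcal{T}=\bigcap_n\sigma(g_n,g_{n+1},\ldots)$, but the Poisson boundary is the point realisation of the \emph{shift-invariant} $\sigma$-algebra $\mathcal{S}=\{A:S^{-1}A=A\}$. The paper draws exactly this distinction in Section~\ref{sec:TV}: the tail $\mathcal{A}_\infty$ corresponds to the parabolic functions $D(P)$, while $\mathcal{S}$ corresponds to $H(P)$, and Theorem~\ref{thm:Derriennic} keeps the two carefully separate. Your argument silently uses shift-invariance at two points: the claim that ``conditioning on the first step'' makes $h_F$ harmonic, and the identity $\E[F\circ\mathbf{bnd}\mid\mathcal F_n]=h_F(g_n)$. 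Both require $F\circ\mathbf{bnd}(g_0,g_1,\ldots)=F\circ\mathbf{bnd}(g_1,g_2,\ldots)$, i.e.\ that $\mathbf{bnd}$ factor through the shift, which does not follow from $\mathcal{T}$-measurability. For a tail-measurable $F$ that is not shift-invariant, $h_F$ is only space-time harmonic (an element of $D(P)$, not $H(P)$), and conversely the image of $\Phi$ sits inside the shift-invariant part of $L^\infty(B,\nu)$ and need not be all of it. Replacing $\mathcal{T}$ by $\mathcal{S}$ throughout fixes the argument.
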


Recall that a measured $G$-space $(X,\lambda)$ is \emph{$\mu$-stationary} if $\lambda=\int_G g\cdot\lambda \ d\mu(g)$. The probability space $(B,\nu)$ above is known as the \emph{Furstenberg-Poisson boundary} or just the \emph{Poisson boundary} of $(G,\mu)$. See also~\cite{KaimVershik} for more details.
Furthermore, $G$-factors of the Poisson boundary are called \textit{$\mu$-boundaries}. That is, for every shift-invariant, $G$-equivariant, measurable map $\Pi$ from the path space $(G^\mathbb{N},{P_{\mu}})$ to a $G$-space $X$, the pair $(X,\Pi_* P_\mu)$ is a $\mu$-boundary. When a $\mu$-stationary $G$-space $(X,\lambda)$ is also a topological space, it is a $\mu$-boundary exactly when for almost every trajectory $(w_n)$ of the $\mu$-random walk, 
\[
w_n\lambda \xrightarrow{w^*} \delta_x,
\] 
for some $x\in X$.  For further details, see for example~\cite{Kaim2000}.

The correspondence of $L^\infty(B,\nu)$ and $H^\infty(G,\mu)$ is closely related to the martingale convergence theorem, which we will also use extensively in our proofs.
A \emph{martingale} is a sequence of complex-valued random variables such that at each step, the conditional expectation of the next value, given the past, is equal to the current value. Harmonic functions naturally give rise to martingales: if $(X_t)_{t=0}^\infty$ is the $\mu$-random walk on $G$ and a function on $G$ $h$ is a $\mu$-harmonic, then for every $g\in G$, the sequence $(h(gX_t))_{t=0}^\infty$ is a martingale.
The martingale convergence theorem then guarantees that every bounded martingale converges almost surely.

In practice, many boundary results only hold or are known for some classes of sufficiently well behaved measures such as the following
\begin{definition}
    Let $G$ be a topological group and $\mu$ a Borel probability measure on $G$.
    \begin{enumerate}
        \item We say that $\mu$ is \emph{generating} if the closed semi-group generated by $supp(\mu)$ equals $G$, and that it is \emph{adapted} if the closed group generated by $supp(\mu)$ equals $G$.
        \item If $G$ is locally compact and second countable, $\mu$ is \emph{spread-out} if there exists $n$ such that $\mu^{*n}$ is non-singular with respect to the Haar measure on $G$.
        \item If $\mu$ is both generating and continuous with respect to the Haar measure, we call it \emph{admissible}.
        \item We say that $\mu$ is \emph{lazy} if $\mu(e)>0$. 
    \end{enumerate}
\end{definition}

As explained in Introduction, the method of our proof allows for a generalisation to Markov chains on state spaces with no algebraic structure.
We recall that a \emph{Markov kernel}, or \emph{transition kernel}, $P\colon M\times\mathcal{F}\lora[0,1]$ on a measurable space $(M,\mathcal{F})$ is a map such that for every fixed $x\in M$, the map $P(x,\cdot)$, also denoted $P_x$, is a probability measure on $(M,\mathcal{F})$ and such that for every $A\in\mathcal{F}$, the map $P(\cdot,A)$ on $M$ is $\mathcal{F}$-measurable.

Given some initial distribution $\kappa$, a Markov chain $\ler{X_n}_{n\geq0}$ is then defined by $X_0\sim\kappa$ and
\[
\mathbb{P}\ler{X_{n+1}\in A \ | \ X_n=x}=P_x(A).
\]

Just as for groups, the $P$-harmonic functions on $M$
\[
H(M,P)=\lers{h\colon M\rightarrow \R \ \bigg| \ h(x)=\int_{M}h(y)\, dP_x(y) \text{ for all } x\in M}
\]
are those integrable functions whose expected value after making a $P$-random step equals the value from which we departed.
In other words, $H(M,P)$ are the eigenfunctions with eigenvalue 1 of the Markov operator $P^*\colon f\mapsto\ler{x\mapsto\int f\, dP_x}$.

\section{Asymptotically commuting measures}\label{section:proof}

\begin{definition}
Let $\mu,\theta$ be two probability measures on $G$. If
\[
\norm{\mu^{*t}*\theta-\theta*\mu^{*t}}_{TV}\to0 \text{ as } t\to\infty, 
\]
we say that \emph{$\mu$  asymptotically commutes with $\theta$} or that \emph{$\mu$  asymptotically centralises $\theta$}.
\end{definition}

The following example demonstrates that the set of probability measures which asymptotically commute with a fixed measure $\theta$ can indeed be strictly larger than that of those which commute with $\theta$.

\begin{example}\label{example}
      Let $L(\Z)=\bigoplus_{\Z} 
      \sfrac{\Z}{2\Z} \rtimes \Z$ be the lamplighter group with $\theta=\delta_{(\mathds{1}_0,0)}$ representing the deterministic action of switching the lamp at the lamplighter's current position and $\mu=\frac{1}{4}\ler{\delta_{(0,1)}+\delta_{(\mathds{1}_1,1)}+\delta_{(0,-1)}+\delta_{(\mathds{1}_{-1},-1)}}$ being the `walk-switch' measure. Then the two convolutions $\theta*\mu$ and $\mu*\theta$ have disjoint supports, because $\theta*\mu$ always switches the lamp at the lamplighter's initial position while $\mu*\theta$ never does, but $\norm{\theta*\mu^{*t}-\mu^{*t}*\theta}_{TV}\to0$ as $t\to\infty$.
    This is because the $\mu$-random walk leaves the lamps of all the integers visited after time 0 uniformly on or off both with probability 1/2, so $\mu^{*t}$ and $\mu^{*t}*\theta$ are in fact the same distribution for every $t\geq1$.
    When the walker on $L(\Z)$ makes one $\theta$-random step and then proceeds with $\mu$-random steps, the configuration of the initial lamp becomes uniformised when the walker revisits it, which happens with probability 1, because the simple random walk on $\Z$ is recurrent.
\end{example}

Example~\ref{example} also shows that asymptotic commutativity is not a symmetric relation in general.
To get a pair of probability measures such that either of them asymptotically centralises the other while they do not commute, one can replace $\theta=\delta_{(\mathds{1}_0,0)}$ above with $\nu=\frac{1}{2}\theta+\frac{1}{2}\mu$. We, however, have no example at this point of a hyperbolic group and two probability measures on it such that each of them asymptotically centralises the other one, while they do not commute. In fact, our guess is that this cannot happen.

\begin{question}
    Let $G$ be a hyperbolic group and $\mu,\theta$ two Borel probability measures on $G$ such that both $\norm{\mu^{*t}*\theta-\theta*\mu^{*t}}_{TV}\to0 \text{ as } t\to\infty$ and $\norm{\theta^{*t}*\mu-\mu*\theta^{*t}}_{TV}\to0 \text{ as } t\to\infty $. Is it necessarily the case that $\mu*\theta=\theta*\mu$?
\end{question}

In the following section, we will prove that if for a function $f$ on $G$, the sequence $f\ler{w_n}$ converges almost surely, where $\ler{w_n}_{n=0}^\infty$ is a trajectory of the $\mu$-random walk, then the translates $f\ler{w_n\gamma}$ also converge almost surely. From this and the martingale convergence theorem, we will deduce Theorem~\ref{thm:mainA}, and then show how its statement would follow from a proposition of Bader and Shalom if we were only interested in restricting our attention to locally compact second countable groups and admissible commuting measures.

\subsection{Proof of Theorem~\ref{thm:mainA}}

Let $G$ be a group equipped with a $G$-invariant $\sigma$-algebra $\Sigma$ and $\mu$ a probability measure on $(G,\Sigma)$. We denote by $\nu_\mu$ the probability measure on $G^\N$ which is the law of the $\mu$-random walk $X_0=e, X_1, X_2, \dots$, that is the random trajectory whose increments $X_i^{-1}X_{i+1}$ are distributed according to $\mu$ and independently of one another. In other words, the convolution power $\mu^{*m}$ is the law of $X_m$.

\begin{proposition}\label{prop:theBasicLemma}
    Let $G$, $\mu$ and $\nu_\mu$ be as above. Let $f\colon G\lora\C$ be a measurable function such that for $\nu_\mu$-almost every trajectory $\ler{w_n}_n\in G^\N$, the values $f\ler{w_n}$ converge. Let $m$ be a fixed positive integer. Then for $\nu_\mu\times\mu^{*m}$-almost every pair $\ler{(w_n)_n,\gamma}\in G^\N\times G$, we have
    \begin{equation}\label{converges}
        |f\ler{w_n}-f\ler{w_n\gamma}|\to0 \text{ as } n\to\infty.
    \end{equation}
\end{proposition}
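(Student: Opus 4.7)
My plan is to exploit the fact that inserting additional iid $\mu$-steps anywhere into a $\mu$-random walk still yields a $\mu$-random walk, so the convergence hypothesis on $f$ applies to such extended walks too. First I would enlarge the probability space so that $\gamma$ is realized as $\gamma = Y_1 Y_2 \cdots Y_m$, where $(Y_j)_{j=1}^m$ are iid $\mu$ and independent of the walk increments $(X_i)_i$. The joint law of $((w_n)_n,\gamma)$ remains $\nu_\mu \times \mu^{*m}$, while $w_n$, $\gamma$, and $w_n\gamma$ are now all words in iid $\mu$-variables on a common space.

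For each $n\geq 0$ I would then introduce the walk $W^{(n)}$ whose increments are $X_1,\ldots,X_n,Y_1,\ldots,Y_m,X_{n+1},X_{n+2},\ldots$. Since all these increments are iid $\mu$, each $W^{(n)}$ is itself a $\mu$-random walk from $e$, so by the hypothesis $(f(W^{(n)}_k))_k$ is almost surely Cauchy in $k$. The key identification is that by construction $W^{(n)}_n = w_n$ while $W^{(n)}_{n+m} = w_n\gamma$, so these extended walks pass through both quantities one wishes to compare.

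Applying the hypothesis directly to the original walk $(w_n)$ also gives $|f(w_n) - f(w_{n+m})| \to 0$ almost surely, which can be rewritten as $|f(w_n) - f(w_n\gamma_n)| \to 0$ with $\gamma_n := X_{n+1}\cdots X_{n+m} \sim \mu^{*m}$ independent of $w_n$. Since $(w_n,\gamma_n) \stackrel{d}{=} (w_n,\gamma)$ for each fixed $n$, this already yields $|f(w_n)-f(w_n\gamma)|\to 0$ in probability under $\nu_\mu \times \mu^{*m}$ (and in $L^1$ when $f$ is bounded).

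The main obstacle is promoting this in-probability convergence to the almost-sure statement demanded by the proposition, since convergence in probability alone does not transfer to almost-sure convergence and Borel--Cantelli does not apply without some quantitative control. My approach would be to use the family $(W^{(n)})$ via a Cauchy-type diagonal argument: for each $n$, the almost-sure Cauchy behaviour of $(f(W^{(n)}_k))_k$ in $k$ controls the gap between $f(W^{(n)}_n)=f(w_n)$ and $f(W^{(n)}_{n+m})=f(w_n\gamma)$, provided the random "convergence time" of $f$ along $W^{(n)}$ is eventually smaller than $n$. Combining this with a careful Fubini argument on the joint measure and the Markov/tail structure of the walk — in particular, the observation that the tails of $W^{(n)}$ and $w$ share the same $X$-sequence up to a shift by $m$ and a finite insertion of $Y$-steps — should upgrade the convergence in probability to the desired almost-sure convergence of $|f(w_n)-f(w_n\gamma)|$ to zero.
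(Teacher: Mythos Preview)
Your reduction to the enlarged space and the identity $(w_n,\gamma_n)\stackrel{d}{=}(w_n,\gamma)$ correctly yields convergence in probability of $|f(w_n)-f(w_n\gamma)|$ to zero, and you rightly flag the upgrade to almost-sure convergence as the crux. However, the mechanism you propose for this upgrade does not close the gap. Knowing that each $W^{(n)}$ is individually a $\mu$-walk, and hence that $(f(W^{(n)}_k))_k$ is almost surely Cauchy, tells you nothing about the gap at the specific indices $k=n$ and $k=n+m$ unless you already know that the Cauchy threshold of $W^{(n)}$ is at most $n$. Since $W^{(n)}$ agrees with $(w_k)$ on $[0,n]$, you do inherit the Cauchy behaviour of $(w_k)$ up to time $n$; but the first potential violation occurs precisely at the transition $n\mapsto n+m$, which is exactly the quantity you are trying to bound. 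Your ``diagonal argument'' therefore assumes what it must prove, and the remark about shared tails does not help: the tail of $W^{(n)}$ after time $n+m$ is $w_n\gamma X_{n+1}X_{n+2}\cdots$, which is not a tail of $(w_k)$ and whose $f$-values you have no handle on.

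The idea you are missing is a stopping-time (or first-passage) argument. Rather than inserting the $Y$-block at a deterministic time $n$, one argues by contradiction: if $|f(w_n)-f(w_n\gamma)|>1/K$ infinitely often on a set of $(\nu_\mu\times\mu^{*m})$-measure $\delta>0$, pick $T$ so that $(f(w_n))$ is $1/K$-Cauchy from time $T$ with probability $>1-\delta$, and let $\tau$ be the \emph{first} time $\geq T$ at which $|f(w_\tau)-f(w_\tau\gamma)|>1/K$. Because $\gamma\sim\mu^{*m}$ is independent of the walk, the trajectory obtained by following $(w_k)$ up to $\tau$ and then taking $m$ steps landing at $w_\tau\gamma$ has $\nu_\mu$-mass at least that of the corresponding slice of the bad set. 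These re-routed trajectories, indexed by the value of $\tau$, are pairwise disjoint and also disjoint from the set of trajectories that are $1/K$-Cauchy after $T$, so their total $\nu_\mu$-mass exceeds $1$ --- a contradiction. This randomisation of the insertion time is what converts your distributional identity at a fixed $n$ into genuine almost-sure control, and it is absent from your proposal.
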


\begin{proof}
    The informal slogan of the proof is that if something has enough of a chance of happening, then it actually does happen.

    The set of trajectories and increments that do not satisfy (\ref{converges}),
    \begin{align*}
        B&:=\lers{\ler{(w_n),\gamma}\in G^\N\times G : |f\ler{w_n}-f\ler{w_n\gamma}|\not\to0}\\
        &=\lers{\ler{(w_n),\gamma}\in G^\N\times G : \exists\varepsilon>0 \text{ such that } |f\ler{w_n}-f\ler{w_n\gamma}|>\varepsilon \text{ for infinitely many } n},
    \end{align*}
    can be written as the increasing union
    \[
    \bigcup_{k=1}^\infty\lers{\ler{(w_n),\gamma}\in G^\N\times G : |f\ler{w_n}-f\ler{w_n\gamma}|>\frac{1}{k} \text{ for infinitely many } n}.
    \]
    Let us suppose on the contrary that
    \[
    \ler{\nu_\mu\times\mu^{*m}}(B)>0.
    \]
    Then there exists a positive integer $K$ such that
    \[
    \ler{\nu_\mu\times\mu^{*m}}\ler{\bigcup_{k=1}^K\lers{\ler{(w_n),\gamma}\in G^\N\times G : |f\ler{w_n}-f\ler{w_n\gamma}|>\frac{1}{k} \text{ for infinitely many } n}}>0.
    \]
    In other words,
    \[
    \ler{\nu_\mu\times\mu^{*m}}\ler{B_K}=\delta
    \]
    for some $\delta>0$, where $B_K\subseteq B$ is the set
    \[
    B_K:=\lers{\ler{(w_n),\gamma}\in G^\N\times G : |f\ler{w_n}-f\ler{w_n\gamma}|>\frac{1}{K} \text{ for infinitely many } n}.
    \]

    At the same time, $\nu_\mu$-almost every sequence $\ler{f(w_n)}_n$ is Cauchy, so there is a nullset $M\subset G^\N$ such that
    \begin{equation}\label{eq:Cauchy}
        G^\N\setminus M=\bigcup_{N=1}^\infty\lers{(w_n)_n\in G^\N : |f(w_i)-f(w_j)|<\frac{1}{K} \text{ for all } i,j\geq N}.
    \end{equation}
    Similarly as above, the left-hand side of (\ref{eq:Cauchy}) is an increasing countable union of nested sets, whose measure is 1, so there exists $T\in\N$ such that
    \[
    \nu_\mu\ler{\lers{(w_n)_n\in G^\N : |f(w_i)-f(w_j)|<\frac{1}{K} \text{ for all } i,j\geq T}}>1-\delta.
    \]
    That is to say, the proportion of walks for which $\frac{1}{K}$-Cauchyness kicks in by time $T$ is greater than $1-\delta$.

    Let us now partition $B_K$ into sets $R_0, R_1, R_2, \dots$ according to when after the $T$-th step a trajectory $(w_n)$ has the first opportunity to violate $\frac{1}{K}$-Cauchyness \emph{if} it steps by $\gamma$.
    Formally,
    \begin{align*}
        R_0&:=\lers{\ler{(w_n),\gamma}\in B_K : |f(w_T)-f(w_T\gamma)|>\frac{1}{K}}\\
        R_i&:=\lers{\ler{(w_n),\gamma}\in B_K : |f(w_{T+i})-f(w_{T+i}\gamma)|>\frac{1}{K}}\setminus {\bigcup_{j=0}^{i-1}R_j}, \text{ for }i\geq1.
    \end{align*}
    $R_i^\to$ will now be the set of (possibly) alternative futures in which the trajectory \emph{actually} steps by $\gamma$ at time $T+i$, causing a change by more than $\frac{1}{K}$ in the value of $f$.

    We have
    \begin{align*}
    R_i^\to:=\big\{\ler{x_n}_n\in G^\N : &\text{ there exists } \ler{(w_n),\gamma}\in R_i \\
    &\text{ such that } x_j=w_j \text{ for all }j\in[T+i],
    \text{ and } x_{T+i+m}=w_{T+i}\gamma\big\},
    \end{align*}
    meaning $\nu_\mu\ler{R_i^\to}\geq\ler{\nu_\mu\times\mu^{*m}}\ler{R_i}$.
    Then the sets $R_0^\to, R_1^\to, R_2^\to,\dots$ and
    \[
    C=\lers{(w_n)\in G^\N : |f(w_i)-f(w_j)|<\frac{1}{K} \text{ for all } i,j\geq T}
    \]
    are all pairwise disjoint, but also
    \[\sum_{i=0}^\infty\nu_\mu\ler{R_i^\to}\geq\sum_{i=0}^\infty\ler{\nu_\mu\times\mu^{*m}}\ler{R_i}=\ler{\nu_\mu\times\mu^{*m}}\ler{B_K}=\delta.
    \]
    This gives
    \[1=\nu_\mu\ler{G^\N}\geq\sum_{i=0}^\infty\nu_\mu\ler{R_i^\to}+\nu_\mu\ler{C}>\delta+(1-\delta)=1,
    \]
    which is a contradiction, so we must have $\ler{\nu_\mu\times\mu^{*m}}(B)=0$ as desired.  
\end{proof}

In a version for discrete groups only (where the condition $\theta\ll\sum_{n\geq0}\frac{1}{2^{n+1}}\mu^{*n}$ is equivalent to saying that $\text{supp} (\theta)\subseteq\bigcup_n\text{supp}\ler{\mu^{*n}}$), the proposition appears in the dissertation of the second author, and for $f$ a bounded real-valued harmonic function, a different proof can be found in Section 4 of Babillot's contribution in~\cite{proceedings}.

Equipped with it, we are now ready to prove Theorem~\ref{thm:mainA}, which we restate for the reader's convenience.

\begin{theorem}
    \label{thm:asmpt-commuting}
    Let $G$ be a group equipped with a $G$-invariant $\sigma$-algebra $\Sigma$ and $\mu$, $\theta$ two probability measures on $(G,\Sigma)$ such that $\theta\ll\sum_{n\geq0}\frac{1}{2^{n+1}}\mu^{*n}$ and
    \[
    \norm{\mu^{*t}*\theta-\theta*\mu^{*t}}_{TV}\to0 \text{ as } t\to\infty.
    \]
    Then $H^\infty(G,\mu)\subseteq H^\infty(G,\theta)$, that is, every bounded $\mu$-harmonic function is also $\theta$-harmonic.
\end{theorem}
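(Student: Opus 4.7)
The plan is to fix a bounded $\mu$-harmonic function $h$ and a point $g\in G$, and to show $\int h(g\gamma)\,d\theta(\gamma)=h(g)$. The strategy is to rewrite this integral as $\int h(gy)\,d(\theta*\mu^{*t})(y)$ using $\mu$-harmonicity, swap $\theta*\mu^{*t}$ with $\mu^{*t}*\theta$ at asymptotic cost zero by the hypothesis, and finally identify the limit as $h(g)$ via Proposition~\ref{prop:theBasicLemma} combined with the bounded martingale convergence theorem.

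First, I would apply Proposition~\ref{prop:theBasicLemma} to the left-translated function $f_g(x):=h(gx)$. Since $\Sigma$ is $G$-invariant, $f_g$ is $\Sigma$-measurable, bounded and $\mu$-harmonic, so the sequence $(f_g(X_t))=(h(gX_t))$ is a bounded martingale; denote its almost-sure limit by $h_\infty$, so that $\E[h_\infty]=h(g)$. The proposition yields, for every fixed $m\in\N$, convergence $|h(gw_n)-h(gw_n\gamma)|\to 0$ for $\nu_\mu\times\mu^{*m}$-a.e.\ pair $((w_n),\gamma)$. Letting
\[
A_g:=\left\{\gamma\in G : |h(gw_n)-h(gw_n\gamma)|\to 0 \text{ for } \nu_\mu\text{-a.e.\ }(w_n)\right\},
\]
Fubini gives $\mu^{*m}(A_g)=1$ for \emph{every} $m\geq 0$, so $A_g$ has full measure under the convex combination $\sum_{m\geq 0}2^{-m-1}\mu^{*m}$; the absolute-continuity hypothesis then yields $\theta(A_g)=1$. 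This is the only place the hypothesis $\theta\ll\sum_{m}2^{-m-1}\mu^{*m}$ enters, and it is the step I expect to require the most care to justify cleanly.

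For each $\gamma\in A_g$, we have $h(gX_t\gamma)=h(gX_t)+o(1)$ almost surely, hence $h(gX_t\gamma)\to h_\infty$ almost surely; dominated convergence (using $|h|\leq\|h\|_\infty$) then gives $\int h(gx\gamma)\,d\mu^{*t}(x)=\E[h(gX_t\gamma)]\to h(g)$ for $\theta$-a.e.\ $\gamma$. Since these integrals are uniformly bounded by $\|h\|_\infty$, a second application of dominated convergence — now in $\gamma$ against $\theta$ — yields
\[
\int h(gy)\,d(\mu^{*t}*\theta)(y)=\int\!\!\int h(gx\gamma)\,d\mu^{*t}(x)\,d\theta(\gamma)\ \lora\ h(g)\qquad\text{as }t\to\infty.
\]

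Finally I would invoke the asymptotic commutativity:
\[
\left|\int h(gy)\,d(\mu^{*t}*\theta-\theta*\mu^{*t})(y)\right|\leq \|h\|_\infty\,\|\mu^{*t}*\theta-\theta*\mu^{*t}\|_{TV}\lora 0,
\]
so $\int h(gy)\,d(\theta*\mu^{*t})(y)$ also tends to $h(g)$. But by Fubini and the $\mu$-harmonicity of $h$,
\[
\int h(gy)\,d(\theta*\mu^{*t})(y)=\int\!\!\int h(g\gamma x)\,d\mu^{*t}(x)\,d\theta(\gamma)=\int h(g\gamma)\,d\theta(\gamma),
\]
which is independent of $t$. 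Therefore $\int h(g\gamma)\,d\theta(\gamma)=h(g)$, establishing that $h$ is $\theta$-harmonic and completing the proof.
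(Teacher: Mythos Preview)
Your proposal is correct and follows essentially the same approach as the paper: both fix $h$ and $g$, use $\mu$-harmonicity to identify $\int h(gy)\,d(\theta*\mu^{*t})(y)$ with $\int h(g\gamma)\,d\theta(\gamma)$, invoke asymptotic commutativity to swap to $\mu^{*t}*\theta$, and then use Proposition~\ref{prop:theBasicLemma} together with bounded martingale convergence and the absolute-continuity hypothesis to show that $\int h(gy)\,d(\mu^{*t}*\theta)(y)\to h(g)$. The only differences are organizational---the paper packages the argument via the random variables $M_t=h(gVX_t)$ and $N_t=h(gX_tV)$ and finishes claim~2 with a direct $\varepsilon$-estimate, whereas you project via Fubini to the full-measure set $A_g\subseteq G$ and apply dominated convergence twice---but the underlying logic is the same.
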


The condition $\theta\ll\sum_{n\geq0}\frac{1}{2^{n+1}}\mu^{*n}$ means that all places that a $\theta$-random walk can visit are also potentially reachable by the $\mu$-random walk. Without this sort of dominance of $\mu$ over $\theta$, one cannot expect that information about $\mu$-harmonicity will imply $\theta$-harmonicity. Indeed, Example~\ref{Example:unrelated measures} shows that this condition is necessary for the theorem to hold.

\begin{proof}
    Let $h$ be a bounded $\mu$-harmonic function and $g$ a fixed element of $G$. Our aim now is to show that $h$ is $\theta$-harmonic at $g$. To that end, we define random variables
    \[
    M_t:=h\ler{gVX_t} \ \ \ \ \text{ and } \ \ \ \  N_t:=h\ler{gX_tV},
    \]
    where $X_t\sim\mu^{*t}$ is the composition of $t$ independent $\mu$-random jumps and $V\sim\theta$ is independent of $X_t$. We will show the following three claims.
    \begin{enumerate}[label=\arabic*)]
        \item $\abs{\E\lesq{M_t}-\E\lesq{N_t}}\to0$ as $t\to\infty$,
        \item $\E\lesq{N_t}\to h(g)$ as $t\to\infty$, and
        \item $\E\lesq{M_t}=\int h(gy) \, d\theta(y)$.
    \end{enumerate}

    Once these are established, we are done, because 1) says that $\E\lesq{M_t}$ and $\E\lesq{N_t}$ are increasingly similar as $t\to\infty$, while 2) and 3) tell us that $\E\lesq{N_t}$ is more and more like $h(g)$ and $\E\lesq{M_t}$ is even equal to $\int h(gy) \, d\theta(y)$. So we must have that $\abs{h(g)-\int h(gy) \, d\theta(y)}\to0$ as $t\to\infty$, but since the expression has in fact no dependency on $t$, we get that $h(g)=\int h(gy) \, d\theta(y)$, or in other words that $h$ is $\theta$-harmonic at $g$. But $g$ was arbitrary, so $h$ is $\theta$-harmonic on the entire group.

    Let us now prove the three claims. The last one is nothing but $\mu$-harmonicity of $h$ through
    \[
    \E\lesq{M_t}=\int_{G\times G} h(gyx) \, d\ler{\theta\times\mu^{*t}}(y,x) = \int_G\int_G h(gyx) \, d\mu^{*t}(x)d\theta(y) = \int_G h(y) \, d\theta (y),
    \]
    and the first one is shown by
    \begin{multline*}
     \abs{\E\lesq{M_t}-\E\lesq{N_t}}=\abs{\int_G h(gy) \, d\ler{\theta*\mu^{*t}-\mu^{*t}*\theta}(y)} \\
    \leq \norm{h}_\infty \norm{\theta*\mu^{*t}-\mu^{*t}*\theta}_{TV} \to0 \text{ as } t\to\infty.   
    \end{multline*}
    To prove the second claim, let us consider
    \begin{align}\label{ineq:2ndclaim}
    \abs{\E\lesq{N_t}-h(g)}&=\abs{\E\lesq{h(gX_tV)}-\E\lesq{h(gX_t)}} \nonumber \\
    &\leq\E\lesq{\abs{h(gX_tV)-h(gX_t)}}=\int_{G\times G} \abs{h(gxv)-h(gx)} \, d\ler{\mu^{*t}\times\theta}(x,v),
    \end{align}
    where we again used the $\mu$-harmonicity of $h$ to replace $h(g)$ with $\E\lesq{h(gX_t)}$. By the bounded martingale convergence theorem, $\ler{h(gw_n)}_{n=0}^\infty$ converges for almost every $\mu$-random trajectory $\ler{w_n}_{n=0}^\infty$, so we are in a position to employ Proposition~\ref{prop:theBasicLemma}, to get that
    \[
    \nu_\mu\times\Tilde{\mu}\ler{\lers{(w_n,\gamma)\in G^\N\times G : \abs{h(gw_n)-h(gw_n\gamma)}\not\to0}}=0.
    \]
    But $\theta$ is absolutely continuous with respect to $\Tilde{\mu}$, and so
    \[
    \nu_\mu\times\theta\ler{\lers{(w_n,\gamma)\in G^\N\times G : \abs{h(gw_n)-h(gw_n\gamma)}\not\to0}}=0
    \]
    as well.
    This implies that for every $\varepsilon>0$, there is $T\in\N$ such that
    \[
    \nu_\mu\times\theta\ler{\lers{(w_n,\gamma) : \abs{h(gw_n)-h(gw_n\gamma)}<\varepsilon \text{ for all }n\geq T}}>1-\varepsilon,
    \]
    and so in particular for any $t\geq T$,
    \begin{align*}
    \mu^{*t}&\times\theta\ler{\lers{(w_n,\gamma) : \abs{h(gw_n)-h(gw_n\gamma)}<\varepsilon}} \\
    \geq\nu_\mu&\times\theta\ler{\lers{(w_n,\gamma) : \abs{h(gw_n)-h(gw_n\gamma)}<\varepsilon \text{ for all }n\geq T}}
    >1-\varepsilon.
    \end{align*}
    Returning back to (\ref{ineq:2ndclaim}), we can now augment it to
    \begin{align}\label{ineq:dominated}
        \abs{\E\lesq{N_t}-h(g)}\leq\int_{G\times G}\abs{h(gxv)-h(gx)} \, d\ler{\mu^{*t}\times\theta}(x,v)
        < \varepsilon\cdot(1-\varepsilon)+2\norm{h}_\infty\cdot\varepsilon,
    \end{align}
    concluding that $\abs{\E\lesq{N_t}-h(g)}\xrightarrow{t\rightarrow\infty}0$, or in other words $\E\lesq{N_t}\xrightarrow{t\rightarrow\infty}h(g)$ as claimed.
\end{proof}

Since the martingale convergence theorem tells us that the assumption of Proposition~\ref{prop:theBasicLemma} of almost sure convergence is satisfied not just by bounded harmonic functions, but even positive harmonic functions, one could wonder whether Theorem~\ref{thm:asmpt-commuting} is extendable to them as well. Nonetheless, the answer is no -- it would fail already in the basic setup of abelian groups.

For example, for any positive integer $q$, the function $f(n)=q^{n}$, for $n\in\Z$, is $\mu_q$-harmonic for $\mu_q=\frac{q}{q+1}\delta_{-1}+\frac{1}{q+1}\delta_1$. Now, for $q_1 \ne q_2$ we will get no inclusion of positive harmonic functions, even though the measures $\mu_{q_1}$ and $\mu_{q_2}$ commute (as the group is abelian).

The failure of Theorem~\ref{thm:asmpt-commuting} for positive harmonic functions has a curious implication.
If measures $\mu$ and $\theta$ on $G$ commute, the only place at which our proof could (and hence, at least on occasion, must) fail is inequality (\ref{ineq:dominated}). What this means is the following.
Let $h$ be a positive $\mu$-harmonic function on $G$, and suppose that we start a number of independent $\mu$-random walks, say, from the identity. As time passes, for most of them, the values of $h$ start to be locally nearly constant, where by `locally', we understand the elements of the group to which the walker could transition within afew steps with high probability. Only for an increasingly more negligible fraction of the walks is this not so, but the extent to which it is not must be so large that the right-hand side of inequality~(\ref{ineq:dominated}) fails to go to 0. In other words, as time progresses and more and more $\mu$-random walks see the values of $h$ become increasingly alike in their vicinity, those fewer and fewer walks that do not must exhibit a very extreme opposite behaviour and see the nearby values of $h$ differ more and more from the value at which the walker is standing.

\subsection{General Markov chains}\label{subsection:MarkovCh}

Our proof can be generalised to the wider context of Markov chains on spaces with no algebraic structure.
Let $P\colon M\times\mathcal{F}\lora[0,1]$ be a Markov kernel on a measurable space $(M,\mathcal{F})$, and let us consider the set
\[
H^\infty(M,P)=\lers{h\colon M\rightarrow \R \ \bigg| \ h\in\ell^\infty(M,\mathcal{F}), h(x)=\int_{M}h(y)\, dP_x(y) \text{ for all } x\in M}
\]
of bounded $P$-harmonic functions on $M$.

\begin{theorem}\label{thm:generalMarkov}
    Let $P,Q$ be two Markov kernels on a measurable space $(M,\mathcal{F})$
    such that for every $x\in M$,
    \[
    \norm{\delta_x P^t Q-\delta_x Q P^t}_{TV}\to0 \ \text{ as } \ t\to\infty.
    \]
    Suppose further that there is $c>0$ such that for all $x\in M, A\in\mathcal{F}$,
    \[
    Q_x(A)\leq c\cdot \tilde{P}_x(A),
    \]
    where $\tilde{P}=\sum_{n\geq0}\frac{1}{2^{n+1}}P^n$.
    Then
    \[
    H^\infty(M,P)\subseteq H^\infty(M,Q),
    \]
    that is, every bounded $P$-harmonic function is also $Q$-harmonic.
\end{theorem}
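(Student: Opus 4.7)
My plan is to adapt the three-claim structure of Theorem~\ref{thm:asmpt-commuting}, translating ``convolving $\mu^{*t}$ with $\theta$ on either side'' into ``inserting the $Q$-step at either end of a $P$-trajectory''. Fix $x\in M$ and a bounded $P$-harmonic function $h$; the goal is $h(x)=\int h\,dQ_x$. Writing $\alpha_t:=\int h\,d(\delta_x P^tQ)$ and $\beta_t:=\int h\,d(\delta_x QP^t)$, I would aim to prove
\begin{enumerate}[label=(\arabic*)]
\item $\abs{\alpha_t-\beta_t}\to 0$ as $t\to\infty$;
\item $\alpha_t\to h(x)$ as $t\to\infty$;
\item $\beta_t=\int h\,dQ_x$ for every $t$.
\end{enumerate}
These together force $\abs{h(x)-\int h\,dQ_x}=0$, so $h$ is $Q$-harmonic at $x$; since $x$ was arbitrary, $h\in H^\infty(M,Q)$.

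Claims (1) and (3) are quick. Claim (1) is immediate from the asymptotic commutativity hypothesis via $\abs{\alpha_t-\beta_t}\leq\norm{h}_\infty\,\norm{\delta_x P^tQ-\delta_x QP^t}_{TV}\to 0$. Claim (3) follows from iterated $P$-harmonicity, $\int h\,dP^t_y=h(y)$ for every $y$, combined with Fubini: $\beta_t=\int\!\int h(z)\,dP^t_y(z)\,dQ_x(y)=\int h\,dQ_x$. The substantive work sits in Claim (2). Using $h(x)=\int h\,dP^t_x$ and Jensen's inequality, I would first bound
\[
\abs{\alpha_t-h(x)}\leq\int\!\int\abs{h(z)-h(y)}\,dQ_y(z)\,dP^t_x(y),
\]
and then apply the domination $Q_y\leq c\tilde P_y$ with $\tilde P=\sum_{m\geq 0}2^{-(m+1)}P^m$ to majorise the right-hand side by
\[
c\sum_{m\geq 0}\frac{1}{2^{m+1}}\,\E\bigl[\abs{h(W_{t+m})-h(W_t)}\bigr],
\]
where $(W_n)_{n\geq 0}$ is the $P$-chain started at $x$, using that the joint law of $(W_t,W_{t+m})$ is precisely $dP^t_x(y)\,dP^m_y(z)$.

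It remains to drive each summand to zero and interchange the sum with the limit. Boundedness and $P$-harmonicity make $(h(W_n))_{n\geq 0}$ a bounded martingale for the natural filtration of the chain, so it converges almost surely and is in particular almost surely Cauchy. For each fixed $m$ this yields $\abs{h(W_{t+m})-h(W_t)}\to 0$ almost surely as $t\to\infty$, and bounded convergence hands us $\E\bigl[\abs{h(W_{t+m})-h(W_t)}\bigr]\to 0$; a dominated-convergence step in $m$, with dominating series $\sum_m 2^{-(m+1)}\cdot 2\norm{h}_\infty$, closes Claim (2). The main conceptual obstacle of the group setting was the bifurcation/``independent external step'' argument of Proposition~\ref{prop:theBasicLemma}; in the Markov setting that obstacle evaporates, because the pair $(W_t,W_{t+m})$ is intrinsically a Markov continuation, so the Cauchy property along a single trajectory already delivers the needed double-integral estimate. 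The only remaining care lies in the bookkeeping around the series defining $\tilde P$ and the DCT interchange in $m$.
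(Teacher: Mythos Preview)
Your proof is correct, and for Claim~(2) it takes a genuinely more direct route than the paper. The paper first establishes Proposition~\ref{prop:LemmaForMCh}, the Markov-chain analogue of Proposition~\ref{prop:theBasicLemma}: it shows $h(X_t^{(m)})-h(Z_t^{(m)})\to 0$ \emph{almost surely} via a contradiction argument involving the bifurcated trajectory sets $R_{d,i}^{\to}$, and only then passes to expectations by bounded convergence. You instead use the uniform-density hypothesis $Q_y\leq c\,\tilde P_y$ to majorise the $Q$-step directly by a convex combination of $P^m$-continuations, reducing the estimate to $c\sum_{m\geq 0}2^{-(m+1)}\E\bigl[|h(W_{t+m})-h(W_t)|\bigr]$; this vanishes by the Cauchy property of the bounded martingale $(h(W_n))$ and two dominated-convergence steps. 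This is shorter and entirely adequate for the theorem as stated. What the paper's longer route buys is a standalone almost-sure statement (Proposition~\ref{prop:LemmaForMCh}) valid for \emph{any} measurable $f$ with $(f(w_n))$ convergent, not only bounded harmonic $h$, and a proof structure that parallels the group case. One small correction to your closing remark: the obstacle does not ``evaporate'' because of the Markov framework---the paper still runs the full bifurcation argument there---but because Theorem~\ref{thm:generalMarkov} assumes the \emph{stronger} hypothesis $Q\leq c\,\tilde P$ (bounded density) rather than the mere absolute continuity $\theta\ll\tilde\mu$ of Theorem~\ref{thm:asmpt-commuting}; it is that bound which licenses your majorisation step.
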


For every $m\in M$, let $\ler{X_t^{(m)}}_{t=0}^\infty$ be the $P$-random walk started from $m$.
Moreover, let $\ler{Z_t^{(m)}}_{t=0}^\infty$ be obtained from $\ler{X_t^{(m)}}_{t=0}^\infty$
by $Z_t^{(m)}$ being the position of our walk after one independent $Q$-random step from $X_t^{(m)}$.

Lastly, let $\ler{W_t^{(m)}}_{t=0}^\infty$ be the $P$-random walk whose initial distribution is $Q_m=\delta_mQ$.

In terms of laws, for every $A\in\mathcal{F}$, we have
\begin{align*}
\P\ler{X_t^{(m)}\in A}&=\ler{\delta_mP^t}(A),\\
\P\ler{Z_t^{(m)}\in A}&=\ler{\delta_mP^tQ}(A),\\
\P\ler{W_t^{(m)}\in A}&=\ler{\delta_mQP^t}(A),
\end{align*}
and we denote the probability measure on $M^\N$ which is the law of the $P$-random walk $\ler{X_t^{(m)}}_{t}$ started at $m$ by $\nu_m$.

\begin{proposition}[Analogue of Proposition \ref{prop:theBasicLemma}]\label{prop:LemmaForMCh}
    Let a state $m\in M$ be fixed and suppose that there is $c>0$ such that $Q_x(A)\leq c\cdot \tilde{P}_x(A)$ for all $x\in M, A\in\mathcal{F}$.
    Let $f\colon M\lora\C$ be a measurable function such that for $\nu_m$-almost every trajectory $\ler{w_t}_t\in M^\N$, the values $f\ler{w_t}$ converge. Then
    \begin{align}\label{expr:convergence}
    f\ler{X_t^{(m)}}-f\ler{Z_t^{(m)}}\xrightarrow{t\rightarrow\infty}0 \ \ \ \ \nu_m \text{-almost surely.}     
    \end{align}
\end{proposition}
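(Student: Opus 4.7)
The plan is to mirror the contradiction argument of Proposition~\ref{prop:theBasicLemma}, with two key translations. First, I would replace the fixed ``second step'' $\gamma\sim\mu^{*m}$ appended to a $\mu$-trajectory with the time-$t$ $Q$-jump from $X_t^{(m)}$ to $Z_t^{(m)}$. Second, the reference measure $\mu^{*m}$ supporting the ``alternative future'' construction will be replaced by the mixture kernel $\tilde{P}=\sum_{N\geq0}\frac{1}{2^{N+1}}P^N$, accessed via the hypothesis $Q_x\leq c\,\tilde{P}_x$: any $Q$-jump can be ``simulated'' by a geometric number of further $P$-steps at a multiplicative cost of $c$.

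I would work in the joint probability space on which $X=\ler{X_t^{(m)}}_t$ has law $\nu_m$ and, conditionally on $X$, the $(V_t)_t$ are independent with $V_t\sim Q_{X_t}$, so that $Z_t^{(m)}=V_t$. Suppose by contradiction the claimed convergence fails. Then for some $K\in\N$ the event $B_K=\lers{|f(X_t)-f(V_t)|>1/K\text{ for infinitely many }t}$ has joint probability $\delta>0$. Since $f(X_t)$ is $\nu_m$-a.s.\ Cauchy by hypothesis, one can choose $T\in\N$ such that the Cauchy event $C_T=\lers{|f(X_a)-f(X_b)|<1/K\ \forall\,a,b\geq T}$ satisfies $\nu_m(C_T)>1-\delta/(2c)$.

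Next, I would partition $B_K=\bigsqcup_{i\geq0}R_i$ according to the first index $i$ with $|f(X_{T+i})-f(V_{T+i})|>1/K$. Conditioning on $X$ and using the independence of the $(V_t)_t$, applying $Q_{X_{T+i}}\leq c\,\tilde{P}_{X_{T+i}}$ to the bad set $\lers{y:|f(X_{T+i})-f(y)|>1/K}$, expanding $\tilde{P}$ as a geometric mixture of $P^N$, and invoking the Markov property of $X$ should yield
\[
\P(R_i)\leq c\sum_{N\geq0}\frac{1}{2^{N+1}}\,\P\ler{R_i^{\mathrm{pref}}\cap A_{i,N}},
\]
where $A_{i,N}=\lers{|f(X_{T+i})-f(X_{T+i+N})|>1/K}$ is an $X$-event and $R_i^{\mathrm{pref}}$ is an $X$-measurable first-time surrogate for the ``$V$-good prefix'' on indices $j<i$, obtained by integrating out $(V_{T+j})_{j<i}$.

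Each $R_i^{\mathrm{pref}}\cap A_{i,N}$ lies in $C_T^c$, since a $1/K$-jump in $f$ past time $T$ witnesses failure of Cauchyness. The plan is to define $R_i^{\mathrm{pref}}$ so that the events $R_i^{\mathrm{pref}}\cap B_i$ with $B_i=\bigcup_N A_{i,N}$ form the $X$-analogue of the first-violation partition (pairwise disjoint in $i$ with union $C_T^c$); the geometric weights would then telescope to give
\[
\sum_i\sum_{N\geq0}\frac{1}{2^{N+1}}\,\P\ler{R_i^{\mathrm{pref}}\cap A_{i,N}}\leq\sum_i\nu_m\ler{R_i^{\mathrm{pref}}\cap B_i}\leq\nu_m(C_T^c).
\]
Combining, $\delta=\sum_i\P(R_i)\leq c\,\nu_m(C_T^c)<\delta/2$, the desired contradiction. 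The main obstacle will be the precise formulation of $R_i^{\mathrm{pref}}$ and verifying its first-opportunity disjointness: unlike the group case, where a single element $\gamma$ cleanly indexes the partition, here the time-varying $Q_x$ forces us to track the $V$-prefix through an $X$-measurable witness whose first-time structure mirrors the ladder of $X$-induced $f$-jumps, with the multiplicative factor $c$ absorbed by the tightening $\delta/(2c)$ of the Cauchy tolerance.
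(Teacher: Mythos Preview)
Your proposal has a genuine gap, and you have already put your finger on it: the object $R_i^{\mathrm{pref}}$ cannot be defined so as to make the argument close. The difficulty is structural. In the group case, the extra step $\gamma$ is a \emph{single} random element, so ``first time $t\geq T$ at which $|f(w_t)-f(w_t\gamma)|>1/K$'' is, after conditioning on $\gamma$, a stopping time for the trajectory alone, and the first-opportunity partition is automatic. In the Markov-chain version, the jumps $V_t\sim Q_{X_t}$ vary with $t$, so ``first bad $V$-jump'' is a function of the \emph{joint} randomness $(X,V)$ and does not project to an $X$-measurable first-time decomposition. If you take the natural candidate $R_i^{\mathrm{pref}}=\{V_{T+j}\notin M_{X_{T+j}}\text{ for }j<i\}$, then your inequality $\P(R_i)\leq c\sum_N 2^{-N-1}\P(R_i^{\mathrm{pref}}\cap A_{i,N})$ does hold (by the Markov property and independence of the $V$'s from the future of $X$), but the sets $R_i^{\mathrm{pref}}\cap B_i$ are \emph{not} disjoint: the $R_i^{\mathrm{pref}}$ are nested and the $B_i$ are unrelated $X$-events, so $\sum_i\P(R_i^{\mathrm{pref}}\cap B_i)$ need not be bounded by $\nu_m(C_T^c)$. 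Conversely, if you force disjointness by taking $R_i^{\mathrm{pref}}=B_0^c\cap\dots\cap B_{i-1}^c$, you lose the upper bound on $\P(R_i)$.

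The paper resolves this by a step you are missing: the second Borel--Cantelli lemma. Conditionally on $X$, the events $\{V_t\in M_{X_t}\}$ are independent, so
\[
B_K\ =\ \Bigl\{(x_t)_t:\ \sum_t Q_{x_t}(M_{x_t})=\infty\Bigr\}
\]
is in fact an $X$-measurable event. The domination $Q_x\leq c\,\tilde P_x$ then gives $B_K\subseteq\{\sum_t\tilde P_{x_t}(M_{x_t})=\infty\}$, still $X$-measurable. Intersecting with Cauchyness past some $T$ produces a set $R$ of \emph{positive} $\nu_m$-measure on which the series $\sum_t\tilde P_{x_t}(M_{x_t})$ diverges. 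The alternative-futures argument is then run separately for each power $P^d$ (where disjointness in $i$ is available for fixed $d$), yielding $\int_R\sum_t P^d_{x_t}(M_{x_t})\,d\nu_m\leq 1$; summing against $2^{-d-1}$ gives $\int_R\sum_t\tilde P_{x_t}(M_{x_t})\,d\nu_m\leq 1$, which contradicts divergence on a set of positive measure. Note that the contradiction is of the form ``$\infty\leq 1$'', not ``$\delta<\delta/2$'': once Borel--Cantelli has converted the problem to a divergent series along the trajectory, no bookkeeping of $\delta$ against $c$ is needed.
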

There is a subtlety in the statement ``$\nu_m$-almost surely'', which becomes clear once reading the beginning of the proof.
\begin{proof}
Let
 \[
    B:=\{(x_t)\in M^\N : \mathbb{P}\ler{f(x_t)-f(Y_t) \not\to0}>0 \text{ where }Y_t\sim Q_{x_t}\}
    \]
be the set of bad trajectories that do not satisfy (\ref{expr:convergence}).
The set $B$ can be written as the increasing union $\bigcup_{k\geq1} B_k$, where
\[
B_k:=\lers{(x_t)_t\in M^\N : \P\ler{|f(x_t)-f(Y_t)|>\frac{1}{k} \text{ for inf. many }t}>0\text{, where }Y_t\sim Q_{x_t}},
\]
    so if the proposition does not hold, meaning $\nu_m(B)>0$, then there exists a positive integer $K$ such that $\nu_m(B_K)>0$.
    Now since the events
    \[
    \lers{|f(x_t)-f(Y_t)|>\frac{1}{K}}
    \]
    are independent, we have that
    \begin{align*}
    B_K&=\lers{(x_t)_t\in M^\N : \mathbb{P}\ler{|f(x_t)-f(Y_t)|>\frac{1}{K} \text{ for inf. many }t}=1 \text{, where }Y_t\sim Q_{x_t}}\\
    &=\lers{(x_t)_t\in M^\N : \sum_t Q_{x_t}\ler{M_{x_t}}=\infty},
    \end{align*}
where for any $x\in M$,
\[
M_x:=\lers{y\in M : |f(x)-f(y)|>\frac{1}{K}}
\]
 is the subset of states at which the value of $f$ differs from $f(x)$ by more than $\frac{1}{K}$.
But
\[
\sum_t Q_{x_t}\ler{M_{x_t}}\leq c\cdot\sum_t \tilde{P}_{x_t}\ler{M_{x_t}},
\]
and so
\[
B_K=\lers{(x_t)_t\in M^\N : \sum_tQ_{x_t}(M_{x_t})=\infty}\subseteq\lers{(x_t)_t\in M^\N : \sum_t\tilde{P}_{x_t}(M_{x_t})=\infty},
\]
implying that
\[
0<\nu_m(B_K)\leq\nu_m\ler{\lers{(x_t)_t\in M^\N : \sum_t\tilde{P}_{x_t}(M_{x_t})=\infty}}.
\]

    Now the sequence $(f(x_t))_t$ is Cauchy for $\nu_m$-almost every trajectory $(x_t)_t$, and so similarly as before, there exists a positive integer $T$ such that the set
    \[
    R:=\lers{(x_t)_t\in M^\N : \sum_t\tilde{P}_{x_t}(M_{x_t})=\infty \text{ and }|f(x_i)-f(x_j)|\leq\frac{1}{K} \text{ for all }i,j\geq T}
    \]
    satisfies $\nu_m(R)>0$.
    
    Let now $R_{d,i}^\to$ be the subset of trajectories whose initial segments lie in $R$ but which take a non-Cauchy step at time $T+i$, where $d$ is the distance from the index witnessing non-Cauchiness. That is,
    \begin{align*}
    R_{d,i}^\to:=\bigg\{(w_t)_t\in M^\N : \exists(x_t)\in R \text{ such that }w_t=x_t \text{ for all }t\leq T+i\\
    \text{ and }|f(w_{T+i-d})-f(w_{T+i})|>\frac{1}{K}\bigg\}.
    \end{align*}
    Then since $R_{d,i}^\to\cap R_{d,j}^\to =\emptyset$ for any $i\neq j$ and fixed natural number $d$, we see that
\begin{align*}
    1\geq\nu_m\ler{\bigcup_{i=d}^\infty R_{d,i}^\to}
    =\sum_{i=d}^\infty \nu_m\ler{R_{d,i}^\to}&\geq\sum_{i=d}^\infty\int_R P^d_{x_{T+i-d}}\ler{M_{x_{T+i-d}}}\, d\nu_m((x_t)_t)\\
    &=\int_R \sum_{i=0}^\infty P_{x_{T+i}}^d\ler{M_{x_{T+i}}}\, d\nu_m((x_t)_t).
\end{align*}
Multiplying by $\frac{1}{2^{d+1}}$, we obtain
\[
\frac{1}{2^{d+1}}\geq\int_R\sum_{i=0}^\infty\frac{1}{2^{d+1}}P^d_{x_{T+i}}\ler{M_{x_{T+i}}} \, d\nu_m((x_t)_t),
\]
and so summing over $d$ gives
\[
1=\sum_{d=0}^\infty\frac{1}{2^{d+1}}
\geq\int_R\sum_{i=0}^\infty\sum_{d=0}^\infty\frac{1}{2^{d+1}}P^d_{x_{T+i}}\ler{M_{x_{T+i}}} \, d\nu_m((x_t))
=\int_R\sum_{i=0}^\infty\tilde{P}_{x_{T+i}}\ler{M_{x_{T+i}}} \, d\nu_m((x_t))=\infty,
\]
which is a contradiction, and so the proposition holds.
\end{proof}

\begin{proof}[Proof of Theorem \ref{thm:generalMarkov}]
    Let $h\in H^\infty(M,P)$ and $m\in M$ be fixed.
    We aim to show that $h$ is $Q$-harmonic at $m$.
    To this end, we define random variables
    \[
    M_t:=h\ler{W_t^{(m)}} \ \text{ and } \ N_t:=h\ler{Z_t^{(m)}}.
    \]
    Just as in the proof of Theorem \ref{thm:asmpt-commuting}, we will be done if we show the following three claims.
    \begin{enumerate}[label=\arabic*)]
        \item $\abs{\E\lesq{M_t}-\E\lesq{N_t}}\xrightarrow{t\to\infty}0$,
        \item $\E\lesq{N_t}\xrightarrow{t\to\infty}h(m)$, and
        \item $\E\lesq{M_t}=\int h \, dQ_m$.
    \end{enumerate}
    This is because we readily conclude from them that $h(m)=\int h \, dQ_m$, that is, $h$ is $Q$-harmonic at $m$. But $m$ was arbitrary, so $h$ is $Q$-harmonic.    

    The third claim holds simply thanks to $P$-harmonicity of $h$, as
    \[
    \E[M_t]=\E\lesq{h\ler{W_t^{(m)}}}=\int h \, d\delta_mQP^t=\int h \, dQ_m.
    \]
    The first claim follows from that
    \begin{multline*}
     \abs{\E\lesq{M_t}-\E\lesq{N_t}}=\abs{\int_M h(x) \, d\ler{\delta_mQP^t-\delta_mP^tQ}(x)} \\
    \leq \norm{h}_\infty \norm{\delta_mQP^t-\delta_mP^tQ}_{TV} \to0 \text{ as } t\to\infty.   
    \end{multline*}

    Finally, to prove the second claim, we consider
    \begin{multline}\label{expression} 
    \abs{\E[N_t]-h(m)}
    =\abs{\E\lesq{h\ler{Z_t^{(m)}}}-\E\lesq{h\ler{X_t^{(m)}}}}\\
    \leq\E\lesq{\abs{h\ler{Z_t^{(m)}}-h\ler{X_t^{(m)}}}}
    =\int_M\int_M\abs{h(x)-h(y)} \, dQ_x(y) \, dP_m^t(x),
    \end{multline}
    where we again used the $P$-harmonicity of $h$ to replace $h(m)$ with $\E\lesq{h\ler{X_t^{(m)}}}$. By the bounded martingale convergence theorem, $(h(w_n))_n$ converges for $\nu_m$-almost every trajectory $(w_n)_n$, and so by Proposition \ref{prop:LemmaForMCh}, the integrand of (\ref{expression}) goes to 0 almost surely as $t\to\infty$. Since $h$ is also bounded, we conclude that (\ref{expression}) must tend to 0 as claimed.
\end{proof}

\section{Necessary and sufficient condition}

In Section \ref{section:proof}, we derived a sufficient condition - asymptotic commutativity - for the inclusion of $H^\infty(G,\mu)$ in $H^\infty(G,\theta)$, where $\mu$ and $\theta$ are probability measures on a group $G$. This condition, however, is not necessary. In this section, we discuss if-and-only-if conditions for the inclusion to hold. 

We begin with an example that demonstrates that the asymptotic commutativity is not necessary for the inclusion of harmonic functions. Such a formal example can be easily constructed on nilpotent groups, say, but the following example is not Liouville.

\begin{example}
    Let $L(\Z)$ be the lamplighter group as in Example~\ref{example}. We construct two probability measures on $L(\Z)$ that have the same bounded harmonic functions, but neither asymptotically commutes with the other. For the sake of simplicity, the measures in this construction do not generate the group, though this is not an essential feature of the counter-example.
    
    Let $\theta$ be ``lazily going to the right and randomly switching'', namely
    \[
    \theta=\frac{1}{4}\left(\delta_{(0,1)}+\delta_{(\mathds
    {1}_{\{1\}},1)}\right)+\frac{1}{2}\delta_{(0,0)},
    \]
    and let
    \[
    \mu=\frac{1}{8}\left(\delta_{(0,1)}+\delta_{(\mathds
    {1}_{\{1\}},1)}+ \delta_{(\mathds
    {1}_{\{2\}},1)}+\delta_{(\mathds
    {1}_{\{1,2\}},1)} \right)+\frac{1}{2}\delta_{(0,0)}.
    \]

    In words, both measures go to the right in a lazy fashion, and $\theta$ switches the lamp to which it arrives with probability $1/2$ (if it was not lazy at this time), while $\mu$ does the same, but in addition, switches the next lamp to the right with probability $1/2$.

    Note that laziness is important here to avoid parity issues.
\end{example}

    It is not hard to see that the two measures in the example above do not asymptotically commute, but they do share the same bounded harmonic functions. However, one can see from their descriptions that they satisfy that both $\|\theta*\mu^{*n}-\mu^{*n}\|_{TV}$ and $\|\mu*\theta^{*n}-\theta^{*n}\|_{TV}$ tend to zero as $n\to \infty$.

    Theorem~\ref{thm:TFAE} below verifies that this is indeed a sufficient and necessary condition for lazy random walks,  as
can be derived from a classical result of Derriennic (Théorème 1 in~\cite{Derriennic}), which we restate here in English and reprove in the Appendix. Furthermore, we add another perspective, which is based on weak* topology and not total variation. To state the condition formally, we need the next section as a background. 

\subsection{$\mu$-Boundaries as hitting measures}\label{Sec:hitting}
Before stating our necessary and sufficient condition for inclusion of bounded harmonic functions, we prove a lemma that has an independent interest for stationary dynamical theory.

The first example one has in mind of a $\mu$-boundary is of a hitting measure: consider, for example, the simple random walk on the free group $\mathbb{F}_2$ with its Gromov boundary $\partial \mathbb{F}_2$. It is easy to see that almost every path of the random walk will converge, in the topology on $\mathbb{F}_2 \cup \partial\mathbb{F}_2$, to a (random) point in $\partial\mathbb{F}_2$. Then $\partial \mathbb{F}_2$ together with the distribution of the hitting point, called the hitting measure, is the canonical example of a $\mu$-boundary. 

Next, we prove that every abstract $\mu$-boundary (that is a probability space with no topology) arises in this way. 

    Let $G$ be a locally compact second countable group and $\mu$ an admissible probability measure on $G$, and let $(X,\kappa)$ be a $\mu$-boundary.
\begin{definition}
    A \emph{hitting model for an abstract $\mu$-boundary $(X,\kappa)$} is a topological space of the form $G \cup K$, such that $\mu$-almost every path in $G$ converges to a point in $K$, and the corresponding hitting measure $\nu\in Prob(K)$ satisfies that $(K,\nu)$ is $G$-equivariantly isomorphic to $(X,\kappa)$.
\end{definition}

Note that we do not require the topology on $K$ or $G\cup K$ to be compact, as one might want. The reason is that for our needs, it is redundant as it is enough that ``it looks compact as far as the random walk is concerned'' in the sense that we get convergence along $\mu$-random paths. 

\begin{lemma}\label{lemma:hitting}
    Let $G$ be a locally compact second countable group and $\mu$ an admissible probability measure on $G$.
    Then every $\mu$-boundary $(X,\kappa)$ admits a hitting model.
\end{lemma}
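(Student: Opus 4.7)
The plan is to realise $(X,\kappa)$ as the limit space of $\mu$-random paths inside a metrisable space built from bounded harmonic functions. Admissibility of $\mu$ ensures that for any bounded measurable $f\colon X\to[-1,1]$, the Poisson transform $\tilde f(g):=\int_X f\,d(g\kappa)$ is a bounded continuous $\mu$-harmonic function on $G$. I would first fix a countable family $\{f_n\}_n\subseteq L^\infty(X,\kappa)$ of unit-bounded functions separating points of $X$ modulo $\kappa$-null sets (available because $L^2(X,\kappa)$ is separable), and a countable dense subgroup $G_0\subseteq G$. Let $\mathcal{F}=\{g\cdot f_n:g\in G_0,\,n\in\N\}$ be the corresponding countable $G_0$-invariant enlargement, and consider
\[
\Phi\colon G\longrightarrow Z:=[-1,1]^{\mathcal{F}},\ \Phi(g)=(\tilde\alpha(g))_{\alpha\in\mathcal{F}},\qquad \iota\colon X\to Z,\ \iota(b)=(\alpha(b))_{\alpha\in\mathcal{F}}.
\]
The map $\iota$ is measurable and injective on a conull Borel subset of $X$ because $\{f_n\}\subseteq\mathcal{F}$ separates points.

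For the topological model, set $K:=\iota(X)\subseteq Z$ with the subspace topology, let $Y$ be the disjoint union $G\sqcup K$, and declare $U\subseteq Y$ open precisely when $U\cap G$ is open in $G$ and every $k\in U\cap K$ has an open neighbourhood $V\subseteq Z$ with $V\cap K\subseteq U\cap K$ and $\Phi^{-1}(V)\subseteq U\cap G$. This is a topology, it preserves the original topology on $G$, and a sequence $g_n\in G$ converges in $Y$ to $k\in K$ iff $\Phi(g_n)\to k$ in $Z$.

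For $\mu$-a.e.\ trajectory $(w_n)$, the bounded martingale convergence theorem guarantees that $\tilde\alpha(w_n)$ converges for every $\alpha\in\mathcal{F}$, and the defining property of a $\mu$-boundary gives $\tilde\alpha(w_n)\to\alpha(b_\infty)$ where $b_\infty\in X$ is the random boundary point attached to $(w_n)$. Hence $\Phi(w_n)\to\iota(b_\infty)\in K$ in $Z$, so $w_n\to\iota(b_\infty)$ in $Y$. The hitting measure is therefore $\nu:=\iota_*\kappa$, and essential injectivity of $\iota$ gives a measurable isomorphism $(K,\nu)\cong(X,\kappa)$ which is $G_0$-equivariant by the $G_0$-invariance of $\mathcal{F}$.

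The main obstacle I anticipate is upgrading $G_0$-equivariance of $\iota$ to full $G$-equivariance, since only countably many $G$-translates were used in the construction of $\mathcal{F}$. This should be overcome by either enriching $\mathcal{F}$ into a genuinely $G$-invariant separable separating family (using weak-$*$ continuity of the $G$-action on $L^\infty(X,\kappa)$), or by invoking the essential uniqueness of $G$-equivariant measurable maps between $\mu$-boundaries together with density of $G_0$ in $G$.
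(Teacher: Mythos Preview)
Your approach is sound and is essentially a by-hand reconstruction of the paper's argument. The paper proceeds by citing two black boxes: Bader--Shalom's result that any $\mu$-boundary admits a compact metrisable model $(K,\nu)$ on which $G$ acts continuously, and Furstenberg's orbital (``measure'') compactification of $G\cup K$, in which $g_n\to k$ is \emph{defined} to mean $g_n\nu\to\delta_k$ weak*; almost-sure convergence of trajectories then follows in one line from the $\mu$-boundary property $w_n\nu\to\delta_{b_\infty}$. Your embedding $\iota\colon X\hookrightarrow Z=[-1,1]^{\mathcal F}$ is precisely a hand-built compact model, and your topology on $G\sqcup K$ is a concrete instance of the orbital compactification, since $\Phi(g)$ records the pairings $\int\alpha\,d(g\kappa)$ against a separating family; the martingale step you give is exactly what underlies the paper's $w_n\nu\to\delta_{b_\infty}$.

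Regarding the $G$-equivariance issue you flag: for the lemma as stated it is not an obstruction, because the definition of a hitting model only demands a \emph{measurable} $G$-equivariant isomorphism $(K,\nu)\cong(X,\kappa)$, and you may simply transport the $G$-action from $X$ to $K$ along $\iota$ on the conull set where $\iota$ is a bijection---equivariance is then tautological. What your construction does not immediately deliver is a \emph{continuous} $G$-action on $K$, which is what is actually used downstream when the paper forms $\theta*\nu$ and argues in the weak* topology on $G\cup K$; that continuity is precisely what the Bader--Shalom model provides. Your proposed fix~(a) would amount to rebuilding that model, while fix~(b) by itself does not equip $K$ with any $G$-action at all.
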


\begin{proof}
    Let $(X,\kappa)$ be a $\mu$-boundary, and let  $K$ be a compact metrisable model of the $\mu$-boundary $(X,\kappa)$ (as in Theorem~2.10 of~\cite{BaderShalom2006}). That is, $K$ is a compact metric space on which $G$ acts continuously, and there exists a measurable isomorphism $\phi\colon X \to K$ of $G$-spaces. Set $\nu=\phi_*\kappa\in Prob(K)$, so $(K,\nu)$ is a model of the $\mu$-boundary $(X,\kappa)$. 
    Next, use the measure $\nu\in Prob(K)$ to define a topology on $G \cup K$ using the ``measure topology'' method described in Section~3.3 of~\cite{Furst71}. Namely, a sequence $(g_n)_n$ of group elements converges to a point $k\in K$ if and only if $\ler{g_n\nu}_n$ converges to $\delta_k$ in the weak* topology on $Prob(K)$. This is indeed a well defined topology as spelled out in~\cite{compactifications}, where this construction takes the name ``orbital compactification''. Note that we do not claim that $G \cup K$ is necessarily compact.

    Since the pair $(K,\nu)$ is a $\mu$-boundary, by~\cite{BaderShalom2006}, almost every path $(w_n)_n\in G^\N$ satisfies that $w_n \nu \to \delta_k$  for some $k\in K$, where $k$ depends on the path $(w_n)_n$. By the definition of the topology on $G\cup K$, this implies that $w_n \to k$. It follows that $\nu$ is the $\mu$-hitting measure.
\end{proof}

\subsection{The weak* condition}\label{sec:weak}

\begin{proposition}\label{prop:iff}
    Let $(G,\mathcal{B}(G))$ be a topological group together with its Borel $\sigma$-algebra, and let $\mu$ and $\theta$ be probability measures on $(G,\mathcal{B}(G))$. 
    Assume that there exists a hitting model $(K,\nu)$ for the Poisson boundary of $(G,\mu)$.
       
 Then $\theta*\mu^{*n}-\mu^{*n}\rightarrow 0$ in the weak* topology on the space of measures on $G\cup K$ if and only if $H^\infty(G,\mu)\subseteq H^\infty(G,\theta)$.
\end{proposition}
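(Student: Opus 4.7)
The plan is to translate both sides of the equivalence into the same stationarity condition, $\theta * \nu = \nu$, on the hitting measure $\nu$ on $K$, where $\theta * \nu := \int_G v\nu\,d\theta(v)$ denotes the pushforward of $\theta \otimes \nu$ under the $G$-action on $K$.

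First I would show, for every $f \in C_b(G \cup K)$, the two weak* convergences
\[
\int f\,d\mu^{*n} \to \int f\,d\nu \qquad\text{and}\qquad \int f\,d(\theta * \mu^{*n}) \to \int f\,d(\theta * \nu).
\]
The first follows directly from the hitting property: $\mu$-almost every trajectory $(w_n)$ converges in $G \cup K$ to a limit $k_\infty \sim \nu$, hence $\E[f(w_n)] \to \E[f(k_\infty)] = \int f\,d\nu$ by bounded convergence. For the second, sample $V \sim \theta$ independently of the walk, so that $Vw_n \sim \theta * \mu^{*n}$. Continuity of the left $G$-action in the orbital topology on $G \cup K$ (if $g_n \to k$ then $v g_n \nu = v(g_n\nu) \to \delta_{vk}$, so $vg_n \to vk$) gives $Vw_n \to Vk_\infty$ almost surely, and bounded convergence together with Fubini yields the claim. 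The hypothesis $\theta*\mu^{*n} - \mu^{*n} \to 0$ weak* therefore passes to the limit as $\int f\,d(\theta*\nu - \nu) = 0$ for every $f \in C_b(G \cup K)$. Since every $f \in C(K)$ extends to $F \in C_b(G \cup K)$ by setting $F(g) := \int f\,d(g\nu)$ for $g \in G$ and $F|_K := f$ (continuity at boundary points is literally the definition of the measure topology), this forces $\theta * \nu = \nu$ on the compact metric boundary $K$; conversely, the two convergences above immediately recover the weak* hypothesis from $\theta*\nu = \nu$.

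Next I would invoke the Poisson isomorphism $L^\infty(K,\nu) \cong H^\infty(G,\mu)$, under which $\hat{h}$ corresponds to $h(g) = \int_K \hat{h}(gk)\,d\nu(k)$. A Fubini calculation gives
\[
\int_G h(gv)\,d\theta(v) = \int_K \hat{h}(gy)\,d(\theta * \nu)(y)
\]
for every $g \in G$, so $\theta * \nu = \nu$ forces $\int h(gv)\,d\theta(v) = h(g)$ and every $h \in H^\infty(G,\mu)$ is $\theta$-harmonic. Conversely, if the inclusion holds, evaluating at $g = e$ gives $\int \hat{h}\,d\nu = \int \hat{h}\,d(\theta*\nu)$ for every bounded measurable $\hat{h}$ on $K$, and testing against indicators of Borel sets yields $\theta * \nu = \nu$.

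The main delicate step will be the weak* convergence $\theta*\mu^{*n} \to \theta*\nu$, which genuinely requires continuity of the left $G$-action on $G \cup K$ and motivates taking the orbital topology from Lemma~\ref{lemma:hitting} as the hitting model. A minor care is that the Poisson map descends to $L^\infty(K,\nu)$ classes only because $\nu$ is $G$-quasi-invariant, but this is a standard feature of the stationary measure, so bounded measurable representatives may be substituted freely in the final step.
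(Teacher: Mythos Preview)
Your argument is correct and follows essentially the same route as the paper: both proofs show $\mu^{*n}\to\nu$ and $\theta*\mu^{*n}\to\theta*\nu$ weak* on $G\cup K$, reduce the hypothesis to $\theta*\nu=\nu$, and then identify this stationarity with the inclusion $H^\infty(G,\mu)\subseteq H^\infty(G,\theta)$ via the Poisson isomorphism. You are in fact more careful than the paper in two places: you spell out why convolution by $\theta$ is weak*-continuous (via continuity of the left $G$-action in the orbital topology, which the paper compresses into ``continuity of the convolution operator''), and you unpack the Fubini computation linking $\theta$-stationarity of $\nu$ to $\theta$-harmonicity of every $h\in H^\infty(G,\mu)$, where the paper just asserts the equivalence. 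The only minor slip is that you refer to $K$ as ``compact metric'', which is a feature of the specific construction in Lemma~\ref{lemma:hitting} rather than part of the general definition of a hitting model; this does not affect the argument.
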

\begin{proof}
    Let us denote by $X_n$ the $n$-th step of the $\mu$-random walk, and by $Y$ an independent $G$-valued random variable with distribution $\theta$. By the definition of a hitting model, we have that $\lim_nX_n\in K$ almost surely.
    Let $X_\infty:=\lim_nX_n$, 
    so we that $X_n\rightarrow X_\infty$ almost surely and $X_\infty$ has distribution $\nu$. Almost surely convergence implies that
     $X_n\rightarrow X$ in distribution, which is exactly the same as the weak* convergence of $\mu^{*n}\rightarrow\nu$.
    By continuity of the convolution operator, $\theta*\mu^{*n}\rightarrow\theta*\nu$ in the weak* topology. Hence, the sequences $\ler{\theta*\mu^{*n}}_n$ and $\ler{\mu^{*n}}_n$ weak*-converge to the same measure if and only if the $\mu$-stationary $G$-space $(K,\nu)$ is also $\theta$-stationary. Since it is a hitting model for the Poisson boundary of $(G,\mu)$, the last happens if and only if $H^\infty(G,\mu)\subseteq H^\infty(G,\theta).$
\end{proof}
For a large class of groups, we know that a hitting model for the Poisson boundary always exists, and so can get the following corollary immediately.
\begin{corollary}\label{iff, maximality}
    Let $(G,\mu)$ be a locally compact second countable group together with an admissible probability measure, and let $\theta$ be a Borel probability measure on $G$.
    Consider a hitting model $G \cup (K,\nu)$ for the Poisson boundary of $(G,\mu)$.

 Then $\theta*\mu^{*n}-\mu^{*n}\rightarrow 0$ in the weak* topology on the space of measures on $G\cup K$ if and only if $H^\infty(G,\mu)\subseteq H^\infty(G,\theta)$.
\end{corollary}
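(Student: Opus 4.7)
The plan is to combine the two results already established in this subsection, since the statement is explicitly advertised as an immediate consequence. The proof reduces to two observations, neither of which requires new work.

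First, I would invoke the existence of the Poisson boundary. Under the standing hypothesis that $G$ is locally compact second countable and $\mu$ is admissible, Furstenberg's Theorem~\ref{thm:Furstenberg} guarantees that the Poisson boundary $(B,\nu_B)$ of $(G,\mu)$ exists as an abstract $\mu$-boundary. Then, applying Lemma~\ref{lemma:hitting} to this abstract $\mu$-boundary produces a hitting model $G \cup K$ via the orbital compactification construction. So the clause ``consider a hitting model for the Poisson boundary of $(G,\mu)$'' is not an extra assumption but is automatically realizable in this setting — this is in fact the whole point of the corollary, namely to clarify that the conclusion of Proposition~\ref{prop:iff} is non-vacuous for the natural class of admissible random walks on LCSC groups.

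Second, with a hitting model in hand, all hypotheses of Proposition~\ref{prop:iff} are satisfied: a topological group $G$ equipped with its Borel $\sigma$-algebra, two Borel probability measures $\mu$ and $\theta$, and a hitting model $(K,\nu)$ for the Poisson boundary of $(G,\mu)$. A direct application of Proposition~\ref{prop:iff} then yields the claimed equivalence between weak*-convergence $\theta\ast\mu^{\ast n} - \mu^{\ast n} \to 0$ on $\mathrm{Prob}(G\cup K)$ and the inclusion $H^\infty(G,\mu) \subseteq H^\infty(G,\theta)$.

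I do not anticipate a genuine obstacle here, since both the existence of a hitting model (Lemma~\ref{lemma:hitting}) and the equivalence under that hypothesis (Proposition~\ref{prop:iff}) have already been proved. If anything, the only point worth double-checking in writing is that the ``topological group + Borel $\sigma$-algebra'' setup of Proposition~\ref{prop:iff} really does include the LCSC case verbatim, and that the topology used on $G \cup K$ is the same orbital compactification topology in both statements — but both are immediate from the way Lemma~\ref{lemma:hitting} and Proposition~\ref{prop:iff} were formulated.
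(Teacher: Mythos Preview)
Your proposal is correct and matches the paper's own proof, which simply says the corollary follows from Lemma~\ref{lemma:hitting} and Proposition~\ref{prop:iff}. Your added remarks about why the hypotheses line up are accurate and do not deviate from the intended argument.
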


\begin{proof}
Follows from Lemma \ref{lemma:hitting} and Proposition \ref{prop:iff}.
\end{proof}

\subsection{The total-variation condition}\label{sec:TV}

In this section, we provide a translated and adapted version of a classical result by Derriennic (Théorème 1 in~\cite{Derriennic}) originally published in French.
The setup of Derriennic, which we now turn to describe, is that of operators on Banach spaces. 

Let $(E,\mathcal{F})$ be a measurable space -- for our purposes, this is usually a topological group with its Borel or Baire $\sigma$-algebra. Let $P\colon E\times\mathcal{F}\lora[0,1]$ be a Markov kernel on $E$, i.e., a transition matrix in the case when $E$ is countable. Let $\mathcal{M}$ denote the set of bounded signed measures on $(E,\mathcal{F})$ -- recall that a signed measure $\nu$ is bounded if $|\nu(F)|<\infty$ for all $F\in\mathcal{F}$, or in other words, if it has finite total variation. The tail $\sigma$-algebra $\mathcal{A}_\infty$ and the shift-invariant $\sigma$-algebra $\mathcal{S}$ are subalgebras of $\mathcal{F}^\N$, the product $\sigma$-algebra on $E^\N$, defined by
\[
\mathcal{S}:=\lers{F\in\mathcal{F}^\N : S^{-1}(F)=F}\subseteq\bigcap_{n\geq1}S^{-n}\ler{\mathcal{F}^\N}=:\mathcal{A}_\infty,
\]
where
\begin{align*}
    S\colon E^\N&\lora E^\N\\
    (x_0,x_1,x_2,\dots)&\mapsto(x_1,x_2,x_3,\dots)
\end{align*}
is the shift map.
For a sub-$\sigma$-algebra $\mathcal{G}\subseteq \mathcal{F}^\mathbb{N}$ and a measure $\kappa$ on $E$, $P_\kappa$ is the associated Markov measure on the path space $E^\mathbb{N}$, and
\[
\|P_\kappa\|_{\mathcal{G}}:=\sup\lers{\int_{E^\mathbb{N}} f\,\,dP_\kappa : f\in L^\infty(E^\mathbb{N},\mathcal{G}), \|f\|_{\infty}\leq 1}
\]
is the operator norm of $P_\kappa$ considered as a linear functional on $L^\infty\ler{E^\N,\mathcal{G}}$.

Furthermore, $H(P)$ are the $P$-harmonic functions on $E$, $D(P)\supseteq H(P)$ is the set of $P$-parabolic functions on $E$ in the sense
\begin{align*}    
D(P):= \{g\in \ell^\infty(E,\mathcal{F}) \, : \, & \exists (g_n)_{n\geq0} \text{ in } \ell^\infty(E,\mathcal{F}) \text{ with } g=g_0\\
&\text{ such that } g_n=P^*g_{n+1} \text{ for all }n\geq0\},
\end{align*}
and $D^1(P)$ is the subset of $D(P)$ defined by
\begin{align*}    
D^1(P):= \{g\in \ell^\infty(E,\mathcal{F}) \, : \, & \exists (g_n)_{n\geq0} \text{ in } \ell^\infty(E,\mathcal{F}) \text{ with } g=g_0\\
&\text{ such that } \|g_n\|\leq1 \text{ and } g_n=P^*g_{n+1} \text{ for all }n\geq0\}.
\end{align*}
\begin{theorem}[Théorème 1 in \cite{Derriennic}]\label{thm:Derriennic}
    For all signed measures $\kappa\in\mathcal{M}$ on $E$,
    \[
    \lim_{n\to\infty}\|\kappa P^n\|=\|P_\kappa\|_{\mathcal{A}_\infty}=\sup\lers{\int_E f \, d\kappa : f\in D^1(P)}
    \]
    and
    \[
    \lim_{n\to\infty}\frac{1}{n}\left\|\sum_{i=1}^n\kappa P^i\right\|=\|P_\kappa\|_{\mathcal{S}}=\sup\lers{\int_E h \, d\kappa : h\in H(P), \|h\|\leq1}.
    \]
\end{theorem}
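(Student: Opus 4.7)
The plan is to prove both chains of equalities by combining three ingredients: contractivity of $P$ to secure existence of the limits; a martingale bijection between $D^1(P)$ (respectively $H(P)$) and bounded $\mathcal{A}_\infty$-measurable (respectively $\mathcal{S}$-measurable) functions on $E^\N$; and the Markov identity $S^n_*P_\kappa=P_{\kappa P^n}$ together with weak* compactness to identify the limits with the operator norms.

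For existence, $\|\kappa P^{n+1}\|_{TV}\le\|\kappa P^n\|_{TV}$ because $P$ is a contraction on $\mathcal{M}$, so the first limit exists as an infimum; and $b_n:=\|\sum_{i=1}^n\kappa P^i\|$ is subadditive (by the triangle inequality combined with $\|(\sum_{i=1}^n\kappa P^i)P^m\|\le b_n$), so by Fekete's lemma $b_n/n\to\inf_n b_n/n$. For the bijection, given $g\in D^1(P)$ with tower $(g_n)$, the process $M_n:=g_n(X_n)$ is a bounded $P_\kappa$-martingale (since $\E[g_{n+1}(X_{n+1})\mid X_0,\dots,X_n]=(P^*g_{n+1})(X_n)=g_n(X_n)$), and its a.s.\ limit $M_\infty$ lies in $L^\infty(\mathcal{A}_\infty,P_\kappa)$ with $\int M_\infty\,dP_\kappa=\int g\,d\kappa$; when $g_n\equiv h\in H(P)$, $M_\infty=\lim_n h(X_n)$ is moreover shift-invariant. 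Conversely, given $F\in L^\infty(\mathcal{A}_\infty,P_\kappa)$ with $\|F\|_\infty\le 1$, I would pick a \emph{pointwise} $\mathcal{A}_\infty$-measurable representative $\phi$: since $\phi$ has no dependence on any finite initial segment, it factors globally as $\phi=\tilde\phi_n\circ S^n$ with the consistency $\tilde\phi_n=\tilde\phi_{n+1}\circ S$ holding pointwise, and setting $g_n(x):=\int\tilde\phi_n\,dP_{\delta_x}$ produces a tower for which $g_n=P^*g_{n+1}$ holds \emph{pointwise}, so $g_0\in D^1(P)$ with $\int g_0\,d\kappa=\int F\,dP_\kappa$. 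A strictly shift-invariant representative yields a harmonic function. Together these establish $\|P_\kappa\|_{\mathcal{A}_\infty}=\sup_{g\in D^1(P)}\int g\,d\kappa$ and $\|P_\kappa\|_\mathcal{S}=\sup\{\int h\,d\kappa:h\in H(P),\|h\|\le1\}$.

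The upper bounds $\|P_\kappa\|_{\mathcal{A}_\infty}\le\lim_n\|\kappa P^n\|$ and $\|P_\kappa\|_\mathcal{S}\le\lim_n b_n/n$ follow from the Markov identity: any $\Phi\in L^\infty(\mathcal{A}_\infty)$ factors as $\Phi=\phi_n\circ S^n$, so $|\int\Phi\,dP_\kappa|=|\int\phi_n\,dP_{\kappa P^n}|\le\|\Phi\|_\infty\|\kappa P^n\|$; averaging the identity $\int\Phi\,dP_\kappa=\int\Phi\,dP_{\kappa P^i}$ over $i=1,\dots,n$ gives the Cesaro counterpart for $\Phi\in L^\infty(\mathcal{S})$. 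For the matching lower bounds I would take near-optimal test functions $f_n\in\ell^\infty(E,\mathcal{F})$, $\|f_n\|_\infty\le 1$, lift them to $E^\N$ as $F_n(\omega):=f_n(\omega_n)$ in the tail case and as $F_n(\omega):=\frac{1}{n}\sum_{i=1}^n f_n(\omega_i)$ in the Cesaro case, and extract a weak* limit $F$ in the unit ball of $L^\infty(E^\N,P_\kappa)$. In the tail case each $F_n$ lies in the weak*-closed subspace $L^\infty(S^{-n}\mathcal{F}^\N,P_\kappa)$, so $F\in L^\infty(\mathcal{A}_\infty,P_\kappa)$ with $\int F\,dP_\kappa=\lim_n\|\kappa P^n\|$; the Cesaro case proceeds similarly, with shift-invariance of $F$ secured by the norm estimate $\|F_n\circ S-F_n\|_\infty=O(1/n)$ combined with a compactness argument.

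The main obstacle is ensuring the functional equation $g_n=P^*g_{n+1}$ in the reverse direction of the bijection holds \emph{pointwise} rather than merely $\kappa P^n$-a.e., as the definitions of $D^1(P)$ and $H(P)$ demand. The resolution rests on picking a genuinely $\mathcal{A}_\infty$-measurable (respectively, shift-invariant) representative $\phi$, so that the factorisation through $S^n$ is global and passes cleanly through the fibrewise integral $\E_{\delta_x}[\cdot]$. A secondary subtlety is that $P_\kappa$ need not be shift-invariant, so Birkhoff's ergodic theorem is unavailable in the Cesaro case; the martingale bijection sidesteps this by working through $H(P)$ directly.
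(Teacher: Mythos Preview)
Your approach is sound and genuinely different from the paper's. The appendix does not touch the path space for this theorem; instead it first proves an abstract Banach-space fact --- for any contraction $T$ on a Banach space $B$, $\lim_n\|T^nx\|=\sup\{|\langle x,x^*\rangle|:x^*\in\bigcap_n T^{*n}S^*\}$ and the analogous Ces\`aro identity with $S^*\cap\ker(T^*-I)$ in place of $\bigcap_n T^{*n}S^*$ --- via Banach--Alaoglu in $B^*$, and then applies it with $B=L^1(m)$, $B^*=L^\infty(m)$ for a $\sigma$-finite measure $m$ on $E$ chosen so that $\kappa\ll m$ and $mP\ll m$ (e.g.\ $m=\sum_n(\kappa^+P^n+\kappa^-P^n)$). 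A separate lemma of Derriennic is then invoked to pass from the $L^\infty(m)$ suprema to suprema over $D^1(P)$ and $H(P)$ in $\ell^\infty(E,\mathcal{F})$; the equalities with $\|P_\kappa\|_{\mathcal{A}_\infty}$ and $\|P_\kappa\|_{\mathcal{S}}$ are not reproved in the appendix at all. Your martingale bijection handles all three columns uniformly and is closer in spirit to the probabilistic theme of the rest of the paper, whereas the paper's route is cleaner functional analysis and isolates the compactness step in a reusable state-space lemma.

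One point in your argument deserves more care: in the Ces\`aro lower bound, the weak* limit $F$ is not automatically shift-invariant just from $\|F_n\circ S-F_n\|_\infty\to0$, because $F\mapsto F\circ S$ need not be weak*-continuous on $L^\infty(E^\N,P_{|\kappa|})$ (the shift is not measure-preserving there). You should run the compactness in $L^\infty(E^\N,\Lambda)$ for some $\Lambda$ with $S_*\Lambda\ll\Lambda$ --- for instance $\Lambda=P_\lambda$ with $\lambda=\sum_n 2^{-n}|\kappa|P^n$ --- so that $F\mapsto F\circ S$ becomes the adjoint of a bounded operator on $L^1(\Lambda)$ and hence weak*-continuous; then your sup-norm estimate does force $F\circ S=F$. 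This is exactly the path-space analogue of the paper's choice of $m$. The same $\Lambda$ also lets you invoke reverse martingale convergence to conclude $\bigcap_n L^\infty(S^{-n}\mathcal{F}^\N,\Lambda)=L^\infty(\mathcal{A}_\infty,\Lambda)$ in the tail case, which your sketch assumes implicitly.
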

As a corollary we get the following.
\begin{proposition}\label{iff, Cesàro}
    Let $(G,\mathcal{F})$ be a group equipped with a $G$-invariant $\sigma$-algebra $\mathcal{F}$ on $G$. Let $\theta$ and $\mu$ be two probability measures on $(G,\mathcal{F})$. Then
    \begin{equation}\label{eqn:lim=sup}
    \lim_{n\to\infty}\frac{1}{n}\left\|\sum_{i=1}^n\ler{\theta*\mu^{*i}-\mu^{*i}}\right\|_{TV}
    =\sup\lers{(\theta*h-h)(e) : h\in H(\mu), \|h\|_\infty\leq1}.
    \end{equation}
    In particular,
    \begin{equation}\label{eqn:iff}
    \frac{1}{n}\left\|\sum_{i=1}^n\ler{\theta*\mu^{*i}-\mu^{*i}}\right\|_{TV}\xrightarrow{n\rightarrow\infty} 0 \iff H^\infty(G,\mu)\subseteq H^\infty(G,\theta).
    \end{equation}
\end{proposition}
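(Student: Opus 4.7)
The plan is to derive this proposition as a direct corollary of Theorem~\ref{thm:Derriennic} by substituting in the right Markov kernel and the right signed measure. I take the measurable space $(G,\mathcal{F})$ equipped with the Markov kernel $P$ defined by $P_g := \delta_g * \mu$. Then $P$-harmonic functions coincide with $\mu$-harmonic functions, and by bilinearity of convolution one has $\kappa P^n = \kappa * \mu^{*n}$ for every bounded signed measure $\kappa$ on $(G,\mathcal{F})$.

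Next I substitute $\kappa := \theta - \delta_e$, a bounded signed measure of zero total mass, into the second clause of Theorem~\ref{thm:Derriennic}. Under this substitution the left-hand side becomes precisely $\tfrac{1}{n}\|\sum_{i=1}^n (\theta*\mu^{*i}-\mu^{*i})\|_{TV}$, while for any $h \in H^\infty(G,\mu)$ with $\|h\|_\infty \leq 1$ one computes
\[
\int_G h \, d\kappa \;=\; \int_G h \, d\theta - h(e) \;=\; (\theta*h - h)(e),
\]
which gives the identity (\ref{eqn:lim=sup}).

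The equivalence (\ref{eqn:iff}) then follows quickly. The reverse direction is immediate: if $H^\infty(G,\mu)\subseteq H^\infty(G,\theta)$, then $(\theta*h)(e)=h(e)$ for every bounded $\mu$-harmonic $h$, so the supremum on the right of (\ref{eqn:lim=sup}) vanishes. For the forward direction, the vanishing of the Cesàro limit forces $(\theta*h-h)(e)\leq 0$ for every $h \in H^\infty(G,\mu)$ with $\|h\|_\infty \leq 1$; applying this to both $h$ and $-h$ and rescaling yields the pointwise identity $(\theta*h)(e)=h(e)$ for every $h \in H^\infty(G,\mu)$. The main (mild) obstacle I anticipate is the final step of upgrading this identity at the neutral element to an identity at an arbitrary $g \in G$, which is the one place the hypothesis really enters: I will apply the identity to the translate $h_g(x):=h(gx)$, which is still $\mu$-harmonic and bounded with the same sup norm, and which is still $\mathcal{F}$-measurable precisely because $\mathcal{F}$ is $G$-invariant. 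Evaluating at $e$ then gives $(\theta*h)(g)=h(g)$, so $h \in H^\infty(G,\theta)$.
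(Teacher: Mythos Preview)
Your proposal is correct and follows essentially the same route as the paper: define the Markov kernel $P_x(A)=\mu(x^{-1}A)$, plug $\kappa=\theta-\delta_e$ into the second clause of Theorem~\ref{thm:Derriennic} to obtain (\ref{eqn:lim=sup}), and then deduce (\ref{eqn:iff}) by combining the $\pm h$ trick with the translate $h_g(x)=h(gx)$. The paper phrases the forward implication by contrapositive rather than directly, but the content is identical; your explicit remark that $G$-invariance of $\mathcal{F}$ is what makes $h_g$ measurable is a nice touch that the paper leaves implicit.
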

\begin{proof}
    The measure $\mu$ gives rise to a Markov kernel $P$ by setting $P_x(A):=\mu\ler{x^{-1}A}$ for any $x\in G, A\in\mathcal{F}$. To obtain (\ref{eqn:lim=sup}), we then take $\kappa=\theta-\delta_e$ in Theorem \ref{thm:Derriennic}. 
    
For (\ref{eqn:iff}), if $H^\infty(\mu)\subseteq H^\infty(\theta)$, then $\theta*h=h$ for all $h\in H^\infty(\mu)$, so in particular $(\theta*h)(e)=h(e)$ for all $h\in H^\infty(\mu)$ with $\|h\|\leq1$, and so the right-hand side of (\ref{eqn:lim=sup}) is equal to 0, giving us that
\[
\frac{1}{n}\left\|\sum_{i=1}^n\ler{\theta*\mu^{*i}-\mu^{*i}}\right\|\to0 \ \text{as }\, n\to\infty.
\]

    On the other hand, if there exists $h\in H^\infty(\mu) \setminus H^\infty(\theta)$, then there must be a $g\in G$ such that $(\theta*h)(g)\neq h(g)$. But then
    \[
    \varphi\colon x\mapsto \frac{1}{\|h\|_\infty}\cdot h(gx)
    \]
    is a well-defined $\mu$-harmonic function such that $\|\varphi\|\leq1$ and $(\theta*\varphi)(e)\neq\varphi(e)$. Since $-\varphi$ satisfies these three properties too, equality (\ref{eqn:lim=sup}) gives us that
    \[
    \lim_{n\to\infty}\frac{1}{n}\left\|\sum_{i=1}^n\ler{\theta*\mu^{*i}-\mu^{*i}}\right\|\geq|(\theta*\varphi-\varphi)(e)|>0.
    \]
\end{proof}
A harmonic function is always parabolic, hence from Proposition~\ref{iff, Cesàro}, we get that for every two probability measures $\mu$ and $\theta$ on $G$,
\[
\lim_n\frac{1}{n}\left\|\sum_{i=1}^n\ler{\theta*\mu^{*i}-\mu^{*i}}\right\|\leq\lim_n \left\|\theta*\mu^{*n}-\mu^{*n}\right\|.
\]
One can now ask under what assumptions we can conclude that 
\begin{equation}\label{iffLimits}
    \|\theta*\mu^{*n}-\mu^{*n}\|\rightarrow0 \text{ if and only if } \frac{1}{n}\left\|\sum_{i=1}^n\ler{\theta*\mu^{*i}-\mu^{*i}}\right\|\rightarrow 0.
\end{equation} 
Equivalence (\ref{iffLimits}) will hold for example if every $\mu$-parabolic function is $\mu$-harmonic.
If one knows that (\ref{iffLimits}) holds, one concludes from Proposition \ref{iff, Cesàro} that
\[
\|\theta*\mu^{*n}-\mu^{*n}\|\rightarrow0 \text{ if and only if } H^\infty(G,\mu)\subseteq H^\infty(G,\theta).
\]
\begin{corollary}
    Let $(G,\mathcal{F})$ be a group equipped with a $G$-invariant $\sigma$-algebra $\mathcal{F}$ on $G$ and $\mu$ a lazy probability measure on $(G,\mathcal{F})$. Then for every probability measure $\theta$ on $G$,
    \[
    \|\theta*\mu^{*n}-\mu^{*n}\|_{TV}\rightarrow 0 \iff H^\infty(G,\mu)\subseteq H^\infty(G,\theta).
    \]
\end{corollary}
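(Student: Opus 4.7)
The plan is to combine Proposition~\ref{iff, Cesàro} with Theorem~\ref{thm:Derriennic}, applied to the signed measure $\kappa := \theta - \delta_e$. Proposition~\ref{iff, Cesàro} already identifies $H^\infty(G,\mu) \subseteq H^\infty(G,\theta)$ with the vanishing of the Cesàro quantity $b_n := \frac{1}{n}\left\|\sum_{i=1}^n(\theta*\mu^{*i}-\mu^{*i})\right\|_{TV}$. Setting $a_n := \|\theta * \mu^{*n} - \mu^{*n}\|_{TV}$, it therefore suffices to prove that, for lazy $\mu$, $a_n \to 0$ if and only if $b_n \to 0$.

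One direction requires no laziness: convolution with the probability measure $\mu$ is a total-variation contraction, so $a_n = \|\kappa * \mu^{*n}\|_{TV}$ is monotone non-increasing and converges to some $L \geq 0$. Then $\frac{1}{n}\sum_{i=1}^n a_i \to L$, and by the triangle inequality $b_n \leq \frac{1}{n}\sum_{i=1}^n a_i$; so $L = 0$ forces $b_n \to 0$, which by Proposition~\ref{iff, Cesàro} gives $H^\infty(G,\mu) \subseteq H^\infty(G,\theta)$.

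For the converse, Theorem~\ref{thm:Derriennic} identifies the two limits as $L = \|P_\kappa\|_{\mathcal{A}_\infty}$ and $\lim_n b_n = \|P_\kappa\|_{\mathcal{S}}$. Since $\mathcal{S} \subseteq \mathcal{A}_\infty$ always yields $\lim_n b_n \leq L$, to get equality I would invoke the classical fact that for lazy (``aperiodic'') Markov chains the tail and the shift-invariant $\sigma$-algebras coincide modulo $|P_\kappa|$-null sets, so $\lim_n b_n = 0$ forces $L = 0$, i.e.\ $a_n \to 0$. The standard proof of this tail-equals-invariant statement uses a coupling argument exploiting the positive atom $\mu(\{e\}) > 0$: independent Bernoulli marks at each time designate lazy steps on which the walk stands still, and these are used to couple the walk with its time-shift so that any tail-measurable event agrees almost surely with its shift. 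This coupling step is the main obstacle. A more operator-theoretic alternative, consistent with the remark after Proposition~\ref{iff, Cesàro}, would be to prove directly that every bounded $\mu$-parabolic function in $D^1(P)$ is $\mu$-harmonic for lazy $\mu$; writing $\mu = (1-\alpha)\delta_e + \alpha\nu$ one sees that the $\nu$-Laplacian of a bounded parabolic sequence is again bounded parabolic, but forcing it to vanish does not seem to follow from boundedness alone, so additional spectral input is still required.
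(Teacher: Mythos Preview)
Your outline is exactly the paper's argument: reduce via Proposition~\ref{iff, Cesàro} and Theorem~\ref{thm:Derriennic} to the assertion that, for lazy $\mu$, every $g\in D^1(P)$ is $\mu$-harmonic (equivalently, $\mathcal{A}_\infty=\mathcal{S}$ mod null). The paper does not prove this step either; it simply cites \cite[Lemma~7.4.1]{Ariel}. So the ``obstacle'' you flag is precisely the one the paper resolves by reference rather than by a self-contained argument, and your proposal is otherwise complete and correct.

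Your second, operator-theoretic route can in fact be finished in two lines, and this is essentially what the cited lemma does. Write $\mu=\alpha\delta_e+(1-\alpha)\nu$ with $\alpha=\mu(\{e\})>0$; then $\mu^{*n}=\sum_{k=0}^n\binom{n}{k}\alpha^{n-k}(1-\alpha)^k\,\nu^{*k}$, so $\|\mu^{*n}-\mu^{*(n+1)}\|_{TV}$ is bounded by the total-variation distance between $\mathrm{Bin}(n,1-\alpha)$ and $\mathrm{Bin}(n+1,1-\alpha)$, which tends to $0$. Now if $(g_n)_{n\ge0}$ witnesses $g_0\in D^1(P)$, then $g_0=P^{*n}g_n$ for every $n$, hence
\[
g_0-P^*g_0=P^{*n}g_n-P^{*(n+1)}g_n,
\]
and $\|P^{*n}g_n-P^{*(n+1)}g_n\|_\infty\le\|g_n\|_\infty\,\|\mu^{*n}-\mu^{*(n+1)}\|_{TV}\le\|\mu^{*n}-\mu^{*(n+1)}\|_{TV}\to0$. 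Thus $g_0=P^*g_0$, i.e.\ $g_0$ is harmonic. No additional ``spectral input'' is needed; the boundedness $\|g_n\|_\infty\le1$ together with the laziness is already enough.
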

\begin{proof}
From~\cite[Lemma 7.4.1]{Ariel}, we know that every $\mu$-parabolic function on $G$ is $\mu$-harmonic, so by the discussion above we get the result. (Although the argument in ~\cite[Lemma 7.4.1]{Ariel} is stated for finitely generated groups, it easily adapts to this generality.)
\end{proof}
In fact, since taking a lazy version of a measure does not change the corresponding space of harmonic functions, we get the following more general statement.
\begin{corollary}\label{cor:der}
    Let $(G,\mathcal{F})$ be a group equipped with a $G$-invariant $\sigma$-algebra $\mathcal{F}$ on $G$ and $\mu$ a probability measure on $(G,\mathcal{F})$. Then for every probability measure $\theta$ on $G$,
    \[
    \left\|\theta*\tilde{\mu}^{*n}-\tilde{\mu}^{*n}\right\|_{TV}\rightarrow 0 \iff H^\infty(G,\mu)\subseteq H^\infty(G,\theta),
    \]
where $\tilde{\mu}=\frac{1}{2}\mu+\frac{1}{2}\delta_e.$
\end{corollary}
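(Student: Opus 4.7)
The plan is to reduce this corollary directly to the previous one by passing from $\mu$ to its lazy version $\tilde\mu$. First, I would observe that $\tilde\mu(\{e\}) \geq 1/2 > 0$, so $\tilde\mu$ is a lazy probability measure on $(G,\mathcal{F})$. Applying the previous corollary with $\tilde\mu$ in place of $\mu$ yields
\[
\|\theta * \tilde\mu^{*n} - \tilde\mu^{*n}\|_{TV} \to 0 \iff H^\infty(G,\tilde\mu) \subseteq H^\infty(G,\theta).
\]
So the only thing that remains is to identify $H^\infty(G,\mu)$ with $H^\infty(G,\tilde\mu)$.

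This identification is the heart of the argument, but it is immediate from the definition of harmonicity. For any bounded function $h\colon G \to \C$ and any $g \in G$,
\[
(\tilde\mu * h)(g) = \tfrac{1}{2}(\mu * h)(g) + \tfrac{1}{2} h(g),
\]
using $\tilde\mu = \tfrac{1}{2}\mu + \tfrac{1}{2}\delta_e$ and the linearity of the convolution in the measure. Hence $(\tilde\mu * h)(g) = h(g)$ if and only if $(\mu * h)(g) = h(g)$, so $h \in H^\infty(G,\mu)$ exactly when $h \in H^\infty(G,\tilde\mu)$. This gives $H^\infty(G,\mu) = H^\infty(G,\tilde\mu)$, and the corollary follows by substituting this equality into the equivalence above.

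There is no real obstacle here; the step that deserves a moment's care is simply checking that the previous corollary genuinely applies, i.e., that $\tilde\mu$ sits in the same measurable framework — a probability measure on $(G,\mathcal{F})$ — which it clearly does since $\delta_e$ is a probability measure on $(G,\mathcal{F})$ and the set of such measures is convex. No appeal to topology, local compactness, or admissibility is needed, matching the generality stated in the corollary.
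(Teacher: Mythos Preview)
Your proposal is correct and follows essentially the same approach as the paper: apply the previous corollary to the lazy measure $\tilde\mu$ and then use that $H^\infty(G,\mu)=H^\infty(G,\tilde\mu)$. The paper simply asserts this last equality in passing, while you spell out the one-line computation; there is no substantive difference.
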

\begin{remark}
   Considering the special case of $\theta=\delta_g$, we obtain the widely used criterion that a lazy $\mu$-random walk on a group $G$ is Liouville, meaning the only bounded harmonic functions it admits are constant, if and only if for every $g\in G$,
\[
\lim_{n\to\infty}\|g*\mu^{*n}-\mu^{*n}\|_{TV}=0.
\] 
\end{remark}

\subsection{Concluding the sufficient and necessary condition}

Combining the results of Sections~\ref{sec:weak} and~\ref{sec:TV}, we get the following.

\begin{theorem}\label{thm:TFAE}
    Let $G$ be a locally compact second countable group and let $\mu$ be an admissible Borel probability measure on $G$. Let $\theta$ be a Borel probability measure on $G$.
    Consider a hitting model $G \cup K$ for the Poisson boundary of $(G,\mu)$.

    Then, the following are equivalent.

    \begin{enumerate}[label=(\arabic*)]
        \item Every bounded $\mu$-harmonic function is $\theta$-harmonic: $H^{\infty}(G,\mu)\subseteq H^\infty(G,\theta).$
        \item  $\|\theta*\tilde{\mu}^{*n}-\tilde{\mu}^{*n}\|_{TV} \rightarrow 0$ where TV is the total-variation norm and $\tilde{\mu}=\frac{1}{2}(\mu+\delta_e)$.
        \item  $\theta*\mu^{*n}-\mu^{*n}\rightarrow 0$ in the weak* topology on $Prob(G\cup K)$. 
    \end{enumerate}
\end{theorem}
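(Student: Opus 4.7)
The plan is to deduce Theorem~\ref{thm:TFAE} simply by assembling the two independent characterisations already proved in Subsections~\ref{sec:weak} and~\ref{sec:TV}; no new argument is needed, only a careful check that the hypotheses of each characterisation are met under the assumptions of the theorem.

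First I would establish the equivalence $(1) \Leftrightarrow (2)$ by directly invoking Corollary~\ref{cor:der}. That corollary is stated for an arbitrary group $G$ equipped with a $G$-invariant $\sigma$-algebra and arbitrary probability measures $\mu$, $\theta$, which subsumes the locally compact second countable Borel setting of Theorem~\ref{thm:TFAE}. The only subtlety is that Corollary~\ref{cor:der} is built on Proposition~\ref{iff, Cesàro} (itself a consequence of Derriennic's Theorem~\ref{thm:Derriennic}) together with the fact that for the lazy measure $\tilde{\mu} = \tfrac{1}{2}(\mu + \delta_e)$ every $\tilde{\mu}$-parabolic function is $\tilde{\mu}$-harmonic and $H^\infty(G,\mu) = H^\infty(G,\tilde{\mu})$; both of these are already recorded in the preceding discussion, so this direction is immediate.

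Second, I would establish $(1) \Leftrightarrow (3)$ by applying Corollary~\ref{iff, maximality}. The corollary requires that $G$ be locally compact second countable and $\mu$ admissible, which is exactly the hypothesis of Theorem~\ref{thm:TFAE}. Under these assumptions, Lemma~\ref{lemma:hitting} guarantees that the Poisson boundary of $(G,\mu)$ admits a hitting model $G \cup K$, and Proposition~\ref{prop:iff} then converts the inclusion of bounded harmonic functions into the weak* asymptotic invariance $\theta*\mu^{*n} - \mu^{*n} \to 0$ in $\mathrm{Prob}(G \cup K)$. This yields the equivalence of (1) and (3).

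Chaining these two equivalences through (1) yields the three-way equivalence claimed by the theorem. The main obstacle, if one can call it that, is purely bookkeeping: checking that the hypotheses of Theorem~\ref{thm:TFAE} match the hypotheses of Corollaries~\ref{cor:der} and~\ref{iff, maximality}. The substantive work -- namely Derriennic's Theorem~\ref{thm:Derriennic} and the construction of a hitting model in Lemma~\ref{lemma:hitting} via the orbital compactification -- has already been carried out in the earlier subsections, so the proof of Theorem~\ref{thm:TFAE} itself is a one-line citation of the two corollaries.
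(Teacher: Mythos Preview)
Your proposal is correct and matches the paper's own proof essentially verbatim: the paper deduces $(1)\Leftrightarrow(2)$ from Corollary~\ref{cor:der} and $(1)\Leftrightarrow(3)$ from Corollary~\ref{iff, maximality}, exactly as you outline. Your additional remarks unpacking where each corollary comes from (Derriennic's theorem plus laziness for the first, Lemma~\ref{lemma:hitting} and Proposition~\ref{prop:iff} for the second) are accurate and, if anything, more explicit than the paper's two-line proof.
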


The reason we use $\tilde{\mu}$ in item (2) instead of $\mu$ is the potential difference between the values of the two expressions in Theorem~\ref{thm:Derriennic}.
For random walks for which the tail and the invariant $\sigma$-algebras coincide mod 0 for every initial distribution -- or equivalently, all $D^1(P)$ are $\mu$-harmonic  -- one can use the original $\mu$.

\begin{proof}
    The equivalence of (1) and (2) is the content of Corollary~\ref{cor:der}, and the equivalence of (1) and (3) is from Proposition~\ref{iff, maximality}.  
\end{proof}

\begin{remark}
It is interesting to compare conditions (2) and (3) in Theorem~\ref{thm:TFAE}. While it is standard that norm convergence implies weak* convergence, the converse implication, $(3) \implies (2)$, is analytically subtle. The weak* convergence in (3) implies that $\theta * \mu^{*n} - \mu^{*n}$ tends to zero when tested against continuous functions on the union $G \cup K$. In contrast, the total-variation convergence in (2) requires uniform control against all measurable functions bounded by 1. In general, upgrading weak* convergence to norm convergence requires additional assumptions; indeed, the direct implication here is not obvious without exploiting the connection to bounded harmonic functions as in (1).
\end{remark}
\section{Applications}\label{sec:applications}
Next, we show several examples of how some classical results follow smoothly from our main theorem.

\subsection{The centre and bounded harmonic functions}
As we already implicitly mentioned in the statement of Theorem~\ref{thm:Furstenberg}, any group acts on its own (harmonic) functions by left translation. That is,
\[
(g\cdot f)(-)=f\ler{g^{-1}\cdot-}
\]
for any function $f$ on $G$ and a group element $g$.

Scattered through the literature are many proofs, of varying levels of generality, that abelian groups have no bounded harmonic functions but constant ones (for spread-out measures -- otherwise, the group breaks down to cosets of the subgroup generated by the support, and a function is harmonic if it is constant on each coset). 
The more general formulation is that the centre $Z(G)$ acts trivially on the set of bounded harmonic functions, from which the generalisation to nilpotent groups follows.

Theorem~\ref{thm:asmpt-commuting} provides another point of view on this classical fact.

\begin{corollary}
    Let $G$ be a topological group equipped with a Borel probability measure $\mu$, and let $h$ be a bounded $\mu$-harmonic function on $G$. Then for every $g\in G$,
    \[
    \tilde{\mu}\ler{\lers{z\in Z(G) : h(zg)\neq h(g)}}=0.
    \]
\end{corollary}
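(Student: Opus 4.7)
The plan is to apply Theorem~\ref{thm:asmpt-commuting} to a family of probability measures supported on $Z(G)$ and dominated by $\tilde{\mu}=\sum_{n\geq 0}\frac{1}{2^{n+1}}\mu^{*n}$, then to upgrade the resulting integrated identity into a pointwise statement by letting the supports shrink. Fix $g\in G$; the conclusion is vacuous if $\tilde{\mu}(Z(G))=0$, so assume the contrary.

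For each Borel set $A\subseteq Z(G)$ with $\tilde{\mu}(A)>0$, form the probability measure $\theta_A:=\tilde{\mu}|_A/\tilde{\mu}(A)$. Any measure supported in $Z(G)$ commutes with $\mu$ exactly (hence asymptotically), so $\theta_A*\mu=\mu*\theta_A$, and by construction $\theta_A\ll\tilde{\mu}$. Both hypotheses of Theorem~\ref{thm:asmpt-commuting} are therefore met, so $H^\infty(G,\mu)\subseteq H^\infty(G,\theta_A)$. Evaluating the $\theta_A$-harmonicity of the given bounded $\mu$-harmonic $h$ at the element $g$ yields
\[
\int_A \bigl(h(gz)-h(g)\bigr)\,d\tilde{\mu}(z)\;=\;0.
\]

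Set $\phi(z):=h(zg)-h(g)=h(gz)-h(g)$ on $Z(G)$, a bounded, Borel-measurable function; the second equality uses $z\in Z(G)$. Since the displayed identity holds for \emph{every} Borel $A\subseteq Z(G)$, slicing by the super-level sets $\{\phi>1/k\}$ and $\{\phi<-1/k\}$ and letting $k\to\infty$ forces $\phi=0$ $\tilde{\mu}$-almost everywhere on $Z(G)$, which is exactly the claim.

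The only real check is measurability in whatever topological generality one wants the corollary to hold in: $Z(G)$ is closed (hence Borel) in any Hausdorff topological group, and $z\mapsto h(gz)$ is Borel since $h$ is. Beyond these routine verifications, the substantive content is entirely supplied by the trivial commutation $\theta_A*\mu=\mu*\theta_A$ coming from centrality, combined with Theorem~\ref{thm:asmpt-commuting}, so I do not anticipate a serious obstacle.
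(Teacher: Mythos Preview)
Your proposal is correct and follows essentially the same route as the paper: both arguments form the conditional measure $\theta=\tilde{\mu}|_A/\tilde{\mu}(A)$ on a Borel subset $A\subseteq Z(G)$, note that centrality gives $\theta*\mu=\mu*\theta$ and $\theta\ll\tilde{\mu}$, and invoke Theorem~\ref{thm:asmpt-commuting} to obtain $\theta$-harmonicity of $h$. The only cosmetic difference is that the paper picks one specific $A$---the set where $\mathrm{Re}\,h(zg)>\mathrm{Re}\,h(g)$---and derives an immediate contradiction, whereas you let $A$ range over all positive-measure Borel subsets and deduce $\phi=0$ a.e.\ from $\int_A\phi\,d\tilde{\mu}=0$; the super-level-set slicing you describe is exactly the paper's choice of $A$ in disguise (bear in mind $h$ may be complex-valued, so strictly one applies your slicing to $\mathrm{Re}\,\phi$ and $\mathrm{Im}\,\phi$ separately).
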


\begin{proof}
    Suppose on the contrary that there exists $g\in G$ such that
    \[
    \tilde{\mu}\ler{\lers{z\in Z(G) : h(zg)\neq h(g)}}>0.
    \]
    Then we can assume without loss of generality that
    \[
    \tilde{\mu}\ler{\lers{z\in Z(G) : \text{Re }h(zg)>\text{Re }h(g)}}>0.
    \]
    Let us now consider the probability measure $\theta$ defined by
    \[
 \theta(A):=\frac{\tilde{\mu}\ler{\lers{A\cap\lers{z\in Z(G) : \text{Re }h(zg)>\text{Re }h(g)}}}}{\tilde{\mu}\ler{\lers{z\in Z(G) : \text{Re }h(zg)>\text{Re }h(g)}}}
    \]
    for any measurable $A\subseteq G$.
    Then $\theta\ll\tilde{\mu}$ and $\theta*\mu=\mu*\theta$, hence by Theorem~\ref{thm:asmpt-commuting}, the function $h$ is $\theta$-harmonic, so in particular
    \[
    \text{Re }h(g)=\int_G \text{Re }h(gy) \, d\theta(y)=\int_G \text{Re }h(yg) \, d\theta(y)>\text{Re }h(g),
    \]
    which is a contradiction.
\end{proof}

If the bounded function $h$ is not just harmonic, but also continuous (this is necessarily the case whenever $G$ is locally compact and second countable and $\Tilde{\mu}$ is not singular with respect to the Haar measure, as Babillot points out in Lemma 1.2 in~\cite{proceedings}) and $G$ is second countable, then the corollary can be strengthened to say that
\[
\tilde{\mu}\ler{\lers{z\in Z(G) : z^{-1}\cdot h\neq h}}=0.
\]
Indeed,
\[
\lers{z\in Z(G) : z^{-1}\cdot h\neq h} = Z(G)\cap\bigcup_{g\in G}\lers{\gamma\in G : |h(\gamma g)-h(g)|>0},
\]
and for every $g\in G$, the open set $\lers{\gamma\in G : |h(\gamma g)-h(g)|>0}$ is a union $\bigcup_{n=1}^\infty B_{g,n}$ of basic open sets.
The union $\bigcup_{g\in G}\lers{\gamma\in G : |h(\gamma g)-h(g)|>0}$ can thus be written as $\bigcup_{g\in G}\bigcup_{n=1}^\infty B_{g,n}$, but there are only countably many open basic sets, so there must be $g_i\in G$ and $n_i\in\N$ such that
\[
\bigcup_{g\in G}\bigcup_{n=1}^\infty B_{g,n}=\bigcup_{i=1}^\infty B_{g_i,n_i}.
\]
Therefore,
\[
\tilde{\mu}\ler{\lers{z\in Z(G)  : z^{-1}\cdot h\neq h}}=\sum_{i=1}^\infty\tilde{\mu}\ler{Z(G)\cap B_{g_i,n_i}},
\]
so $\tilde{\mu}\ler{\lers{z\in Z(G) : z^{-1}\cdot h\neq h}}>0$ would imply the existence of $i\in\N$ such that $\tilde{\mu}\ler{Z(G)\cap B_{g_i,n_i}}>0$ and we would proceed as before, contradicting harmonicity at $g_i$.

Similarly, in Theorem~\ref{thm:asmpt-commuting}, knowing that all bounded $\mu$-harmonic functions are continuous would allow us to replace the assumption $\norm{\theta*\mu^{*t}-\mu^{*t}*\theta}_{TV}\xrightarrow{t\rightarrow\infty}0$ with the weaker one saying that $\theta*\mu^{*t}-\mu^{*t}*\theta$ goes to the zero measure in the weak* topology.

\subsection{Convex combinations of convolution powers}

It is often helpful to be able to assume that a probability measure with respect to which we perform a random walk is fully supported. The following classical result says we can always assume this while keeping the set of bounded harmonic functions intact.
It can be proved by separating a $(\alpha_1\mu+\alpha_2\mu^{*2}+...)$-random walk to two parts, where one is a random walk on $\N$, as in Proposition 2.4 in
\cite{hartman2015furstenberg} (or by other means~\cite{forghani2019positive}, where \textit{positive} harmonic functions are discussed). Theorem \ref{thm:asmpt-commuting} provides a one-line proof, which moreover does not place any restrictions on the group $G$.

\begin{proposition}
     Let $G$ be a group equipped with a $G$-invariant $\sigma$-algebra $\Sigma$ and a probability measure $\mu$ on $(G,\Sigma)$. Let $(\alpha_n)_{n=0}^\infty$ be a sequence of real non-negative numbers such that $\sum_{n=0}^\infty\alpha_n=1$ and $\alpha_1>0$. We define a probability measure
     \[
     \theta:=\sum_{n=0}^\infty\alpha_n\mu^{*n}.
     \] 
     Then $H^\infty(G,\mu)= H^\infty(G,\theta)$.
\end{proposition}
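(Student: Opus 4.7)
The plan is to apply Theorem~\ref{thm:asmpt-commuting} in both directions. The central observation is that $\mu$ and $\theta$ commute exactly: since every convolution power $\mu^{*n}$ commutes with $\mu$, we have $\mu * \theta = \sum_n \alpha_n (\mu * \mu^{*n}) = \sum_n \alpha_n (\mu^{*n} * \mu) = \theta * \mu$, and by induction $\mu^{*t} * \theta = \theta * \mu^{*t}$ for every $t \geq 0$. Consequently $\norm{\mu^{*t}*\theta - \theta*\mu^{*t}}_{TV} = 0$ for all $t$, so the asymptotic commutativity hypothesis of Theorem~\ref{thm:asmpt-commuting} is satisfied vacuously in either direction.

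For the forward inclusion $H^\infty(G,\mu) \subseteq H^\infty(G,\theta)$, I would verify that $\theta \ll \sum_{n \geq 0} \frac{1}{2^{n+1}} \mu^{*n}$. This is immediate: if $A \in \Sigma$ satisfies $\sum_n \frac{1}{2^{n+1}} \mu^{*n}(A) = 0$, then $\mu^{*n}(A) = 0$ for every $n$, so $\theta(A) = \sum_n \alpha_n \mu^{*n}(A) = 0$. Theorem~\ref{thm:asmpt-commuting} then gives the inclusion, with no use of the hypothesis $\alpha_1 > 0$.

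For the reverse inclusion $H^\infty(G,\theta) \subseteq H^\infty(G,\mu)$, I would swap the roles of $\mu$ and $\theta$ in Theorem~\ref{thm:asmpt-commuting}; the symmetry of exact commutativity makes this legitimate. What remains is to check $\mu \ll \sum_{n \geq 0} \frac{1}{2^{n+1}} \theta^{*n}$, and this is where $\alpha_1 > 0$ enters: since $\theta \geq \alpha_1 \mu$ as measures, $\mu \leq \alpha_1^{-1} \theta$, and because $\sum_{n \geq 0} \frac{1}{2^{n+1}} \theta^{*n} \geq \frac{1}{4}\theta$, we obtain $\mu \ll \sum_{n \geq 0} \frac{1}{2^{n+1}} \theta^{*n}$. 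Theorem~\ref{thm:asmpt-commuting} then yields the reverse inclusion, and hence equality.

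There is no real obstacle here; the entire argument is a one-liner in each direction once Theorem~\ref{thm:asmpt-commuting} is in hand. The only subtlety is the asymmetric use of $\alpha_1 > 0$: it is not needed for $\mu$-harmonic functions to be $\theta$-harmonic (since $\theta$ is built from the $\mu^{*n}$), but it supplies the one-sided domination $\theta \geq \alpha_1 \mu$ required to reverse the roles. Note that no topological or local-compactness assumption on $G$ is used anywhere, exactly as claimed.
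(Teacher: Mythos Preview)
Your proof is correct and follows exactly the paper's own argument: establish $\mu*\theta=\theta*\mu$, verify the absolute-continuity conditions $\theta\ll\sum_n 2^{-(n+1)}\mu^{*n}$ and (using $\alpha_1>0$) $\mu\ll\sum_n 2^{-(n+1)}\theta^{*n}$, and invoke Theorem~\ref{thm:asmpt-commuting} in each direction. The paper's proof is simply a terser version of what you wrote.
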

\begin{proof}
    As $\theta$ is a convex combination of powers of $\mu$, we have $\mu*\theta=\theta*\mu$.
    Moreover,
    \[
    \theta\ll \sum
    \frac{1}{2^{n+1}}\mu^{*n},
    \]
    and since $\alpha_1>0$,
    \[
    \mu\ll\sum\frac{1}{2^{n+1}}\theta^{*n}.
    \]
    Then by Theorem \ref{thm:asmpt-commuting}, we get that $H^\infty(G,\mu)= H^\infty(G,\theta)$.
\end{proof}
\begin{corollary}
     Let $(G,\Sigma)$ and a probability measure $\mu$ be as above. Let $t\in(0,1)$ be fixed. Define the probability measure $\theta=t\mu+(1-t)\delta_e$, where $\delta_e$ is the Dirac measure on the identity element.
    Then $H^\infty(G,\mu)=H^\infty(G,\theta).$ In other words, when talking about harmonic functions, one can always assume that the measure is \textit{lazy}, that is, its support includes the identity element as an atom.
\end{corollary}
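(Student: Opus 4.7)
The plan is to deduce this corollary directly from the preceding proposition by exhibiting $\theta$ as a convex combination of convolution powers of $\mu$ of the precise form required there. Specifically, I would set $\alpha_0 := 1-t$, $\alpha_1 := t$, and $\alpha_n := 0$ for all $n \geq 2$.

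With this choice, the sequence $(\alpha_n)_{n=0}^\infty$ consists of non-negative reals summing to $(1-t)+t=1$, and the crucial positivity condition $\alpha_1>0$ is guaranteed by the hypothesis $t\in(0,1)$. Recalling the convention $\mu^{*0}=\delta_e$, I would then compute
\[
\sum_{n=0}^\infty \alpha_n \mu^{*n} = (1-t)\delta_e + t\mu = \theta,
\]
so that $\theta$ is precisely the measure whose equality of bounded harmonic functions with $\mu$ was established in the previous proposition. Invoking that proposition immediately yields $H^\infty(G,\mu)=H^\infty(G,\theta)$, completing the proof.

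There is no substantive obstacle here: the corollary is simply the specialisation of the proposition to the two-term convex combination where all mass is placed on $\mu^{*0}$ and $\mu^{*1}$. The only point worth double-checking is that the hypothesis $t\in(0,1)$ (rather than $t\in[0,1]$) is needed to ensure both $\alpha_1=t>0$ and that $\theta$ itself is not degenerate, so that the mutual absolute continuity conditions used in the previous proof apply symmetrically.
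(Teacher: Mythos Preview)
Your proposal is correct and is exactly the intended argument: the paper states this corollary immediately after the proposition with no separate proof, since it is precisely the specialisation $\alpha_0=1-t$, $\alpha_1=t$, $\alpha_n=0$ for $n\ge 2$.
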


\subsection{Restricting bounded harmonic functions on $G_1\times G_2$}
In this subsection, we consider the question of the restriction of a bounded harmonic function on a product group to one of the factors that is still harmonic. Namely, let $\mu_i$ be a probability measure on $G_i$ for two groups $G_1$ and $G_2$, and consider $G=G_1\times G_2$ and $\mu=\mu_1 \times \mu_2$.

The setup is similar to the one considered in Bader-Shalom~\cite{BaderShalom2006}, in their Factor and Intermediate Factor Theorems. In particular, one can find some similarities between our corollary below and Lemma 3.1 in~\cite{BaderShalom2006}. However, in the framework of bounded harmonic functions, one has to be a bit more careful, as we explain now.

Observe that in the greatest generality, the statement is false. Namely, the restriction is not necessarily harmonic, as the following example shows.

\begin{example}\label{Example:unrelated measures}
Let $G_1$ and $G_2$ be two copies of the free group $\mathbb{F}_2=\langle a,b \rangle$, and let both $\mu_1$ and $\mu_2$ be the simple random walk $\frac{1}{4}\ler{\delta_a+\delta_{a^{-1}}+\delta_b+\delta_{b^{-1}}}$.
Note that the product measure $\mu= \mu_1 \times \mu_2$ on $G= \mathbb{F}_2 \times \mathbb{F}_2$ is not generating, because any element that can be reached through $\mu$ will always have equal parity of word lengths in both coordinates.
Indeed, the $\mu$-random walk would keep the parity in each coordinate, coordinated.
That is, denote by $H\le G$ be the subgroup of elements $(g_1,g_2)$ such that the number $|g_1|+|g_2|$ is even. Then $supp(\mu)\subset H$, and hence the characteristic function $1_H$ on $G$ is $\mu$-harmonic. However, its restriction to either coordinate is not harmonic.
\end{example}

However, as a corollary of Theorem~\ref{thm:asmpt-commuting}, we get the following.

\begin{corollary}\label{Cor:product-group}
    Let $G_1$, $G_2$ be groups, each equipped with an invariant $\sigma$-algebra, and let $\mu_1, \mu_2$ be two probability measures on them. Suppose further that $\mu_1\ler{\lers{e_{G_1}}}>0$. Then if $h$ is a bounded $\mu_1\times\mu_2$-harmonic function on $G_1\times G_2$, the function
    \begin{align*}
        h\ler{g_1,-}\colon G_2&\lora\C\\
        \gamma&\mapsto h(g_1,\gamma)
    \end{align*}
    is $\mu_2$-harmonic for every $g_1\in G_1$.
\end{corollary}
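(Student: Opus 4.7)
The plan is to deduce the corollary from Theorem~\ref{thm:asmpt-commuting} by a smart choice of auxiliary measure on $G := G_1 \times G_2$ (equipped with the product of the two invariant $\sigma$-algebras). I would set
\[
\theta := \delta_{e_{G_1}} \times \mu_2.
\]
Unpacking $\theta$-harmonicity at a point $(g_1, g_2)$ gives exactly
\[
h(g_1, g_2) = \int_{G_2} h(g_1, g_2 \gamma_2) \, d\mu_2(\gamma_2),
\]
so the statement ``every bounded $(\mu_1 \times \mu_2)$-harmonic function is $\theta$-harmonic'' is identical to the desired conclusion that each slice $h(g_1, -)$ is $\mu_2$-harmonic. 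Hence it suffices to verify the two hypotheses of Theorem~\ref{thm:asmpt-commuting} for the pair $(\mu_1 \times \mu_2, \theta)$.

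For the absolute continuity $\theta \ll \sum_{n \geq 0} \frac{1}{2^{n+1}} \mu^{*n}$, I would use the laziness hypothesis in the form of the decomposition $\mu_1 = \mu_1(\{e_{G_1}\}) \delta_{e_{G_1}} + (\mu_1 - \mu_1(\{e_{G_1}\}) \delta_{e_{G_1}})$, which immediately gives the inequality of positive measures
\[
\mu = \mu_1 \times \mu_2 \;\geq\; \mu_1(\{e_{G_1}\}) \cdot (\delta_{e_{G_1}} \times \mu_2) \;=\; \mu_1(\{e_{G_1}\}) \cdot \theta.
\]
Since $\mu_1(\{e_{G_1}\}) > 0$, this yields $\theta \ll \mu$, and so a fortiori $\theta \ll \sum_{n\geq 0} \frac{1}{2^{n+1}} \mu^{*n}$, which dominates $\frac{1}{4}\mu$. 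For the asymptotic commutativity hypothesis, the measures will actually commute on the nose, which is even stronger: convolution of product measures factors coordinate-wise, and $\mu_2 * \mu_2^{*t} = \mu_2^{*t} * \mu_2 = \mu_2^{*(t+1)}$, so
\[
\theta * \mu^{*t} = \mu_1^{*t} \times \mu_2^{*(t+1)} = \mu^{*t} * \theta.
\]

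I do not foresee any serious obstacle; the argument is essentially a single application of Theorem~\ref{thm:asmpt-commuting} after the right choice of $\theta$. The main point to be slightly careful about is that we work with an abstract $G$-invariant $\sigma$-algebra rather than a Borel structure, but taking $\theta$ as a product measure on the natural product $\sigma$-algebra handles this. The laziness assumption $\mu_1(\{e_{G_1}\}) > 0$ is precisely what makes the absolute continuity step succeed, which is consistent with Example~\ref{Example:unrelated measures}, where the failure of any such dominance of $\theta$ by $\mu$ is exactly what allowed the restriction of a harmonic function to fail to be harmonic.
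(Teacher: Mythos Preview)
Your proposal is correct and follows essentially the same approach as the paper: both choose $\theta = \delta_{e_{G_1}} \times \mu_2$, observe that the laziness assumption $\mu_1(\{e_{G_1}\})>0$ gives $\theta \ll \mu_1 \times \mu_2$, note that $\theta$ and $\mu_1 \times \mu_2$ actually commute (not just asymptotically), apply Theorem~\ref{thm:asmpt-commuting}, and then read off $\mu_2$-harmonicity of each slice from $\theta$-harmonicity of $h$.
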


\begin{proof}
    Let $g_1\in G_1$ be fixed.
    We have $\delta_e\times\mu_2\ll\mu_1\times\mu_2$ and $\ler{\delta_e\times\mu_2}*\ler{\mu_1\times\mu_2}=\mu_1\times\mu_2^2=\ler{\mu_1\times\mu_2}*\ler{\delta_e\times\mu_2}$, so by Theorem~\ref{thm:asmpt-commuting}, the function $h$ is $\delta_e\times\mu_2$-harmonic. This in particular means that for every $\gamma\in G_2$,
    \[
    h\ler{g_1,\gamma}=\int_{G_1\times G_2} h\ler{(g_1,\gamma)(a,b)} \, d\ler{\delta_e\times\mu_2}(a,b)
    =\int_{G_2} h\ler{g_1,\gamma b} \, d\mu_2(b),
    \]
    that is, the function $h\ler{g_1,-}$ on $G_2$ is $\mu_2$-harmonic.
\end{proof}

Note that Theorem~\ref{thm:asmpt-commuting} allows a more general setup, of almost commuting measures. However, we chose to state Corollary~\ref{Cor:product-group} for commuting measures for simplicity.

\newpage

\begin{appendices}

\appendixpage

We will first prove a general result about contractions of Banach spaces, which we will subsequently apply to the special case in which the Banach space is that of signed measures with finite total variation, obtaining Theorem~\ref{thm:Derriennic}.

\begin{thm}[Théorème 2 in~\cite{Derriennic}]
    Let $B$ be a Banach space, $B^*$ its dual, $S^*$ the closed unit ball of $B^*$, and $T\colon B\lora B$ a linear contraction. Then for every $x\in B$,
    \[
    \lim_{n\to\infty}\|T^nx\| = \sup\lers{|\langle x,x^*\rangle| \, : \, x^*\in\bigcap_{n\geq1}T^{n*}S^*}
    \]
    and
    \[
    \lim_{n\to\infty}\frac{1}{n}\left\|\sum_{i=1}^nT^ix\right\| = \sup\lers{|\langle x,x^*\rangle| \, : \, x^*\in S^*\cap I},
    \]
    where
    \[
    I=\lers{x^*\in B^* \, : \, x^*=T^*x^*}.
    \]
\end{thm}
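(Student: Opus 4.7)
The core idea is to use Banach-Alaoglu compactness of $S^*$ in the weak* topology, together with the fact that the dual norm of any vector $v\in B$ can be written as $\|v\|=\sup_{x^*\in S^*}|\langle v,x^*\rangle|$, pushing $T$ (or the Cesàro average) onto the other side of the pairing.

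First I would rewrite the left-hand sides as extremal problems over the dual ball. Since $T$ is a contraction, so is $T^*$, hence $T^{n*}S^*\subseteq S^*$, the sequence $(T^{n*}S^*)_n$ is decreasing, and consequently $n\mapsto\|T^nx\|$ is non-increasing, so the limit exists. Writing
\[
\|T^nx\|=\sup_{x^*\in S^*}|\langle T^nx,x^*\rangle|=\sup_{x^*\in S^*}|\langle x,T^{n*}x^*\rangle|=\sup_{y^*\in T^{n*}S^*}|\langle x,y^*\rangle|,
\]
the inequality $\sup\{|\langle x,y^*\rangle|:y^*\in\bigcap_n T^{n*}S^*\}\leq\lim_n\|T^nx\|$ is immediate, because the intersection is contained in every $T^{n*}S^*$. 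For the reverse inequality, pick $x_n^*\in S^*$ with $|\langle x,T^{n*}x_n^*\rangle|\geq\|T^nx\|-1/n$ and set $y_n^*:=T^{n*}x_n^*\in S^*$. By Banach-Alaoglu, extract a weak*-convergent subnet $y_{n_\alpha}^*\to y^*$. Since $T^{n*}$ is weak*-continuous (as the dual of a bounded operator), each $T^{n*}S^*$ is weak*-closed; for any fixed $n$, we have $y_{n_\alpha}^*\in T^{n*}S^*$ eventually, so $y^*\in T^{n*}S^*$, and hence $y^*\in\bigcap_n T^{n*}S^*$. Weak*-continuity of $\langle x,\cdot\rangle$ yields $|\langle x,y^*\rangle|\geq\lim_n\|T^nx\|$, completing the first equality.

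The second equality is proved by the same scheme, with one key extra ingredient. Let $A_n=\frac{1}{n}\sum_{i=1}^nT^i$. The easy direction uses that if $x^*\in S^*\cap I$ then $T^{i*}x^*=x^*$ for all $i$, so $\langle x,x^*\rangle=\langle A_nx,x^*\rangle$, giving $|\langle x,x^*\rangle|\leq\|A_nx\|$. For the hard direction, choose $x_n^*\in S^*$ with $|\langle A_nx,x_n^*\rangle|\geq\|A_nx\|-1/n$ and set $y_n^*:=A_n^*x_n^*\in S^*$. The telescoping identity
\[
T^*y_n^*-y_n^*=\frac{1}{n}\bigl(T^{(n+1)*}x_n^*-T^*x_n^*\bigr)
\]
gives $\|T^*y_n^*-y_n^*\|\leq 2/n\to 0$ in norm, hence a fortiori weak*. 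Taking a weak*-convergent subnet $y_{n_\alpha}^*\to y^*$, weak*-continuity of $T^*$ yields $T^*y^*=y^*$, i.e., $y^*\in I$, and $|\langle x,y^*\rangle|=\lim_\alpha|\langle x,y_{n_\alpha}^*\rangle|=\lim_\alpha|\langle A_{n_\alpha}x,x_{n_\alpha}^*\rangle|\geq\lim_n\|A_nx\|$.

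The main obstacle I anticipate is purely bookkeeping: $S^*$ need not be weak*-metrizable (since $B$ may not be separable), so one must work with subnets rather than subsequences when invoking Banach-Alaoglu, and be slightly careful that the pairing with the \emph{absolute value} behaves well under weak* limits (which it does, since a single continuous linear functional followed by $|\cdot|$ is weak*-continuous). Everything else is standard contraction/duality manipulation, and the second part really only requires the observation that Cesàro averages of a contraction act almost like fixed points of $T^*$ asymptotically.
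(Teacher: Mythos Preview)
Your proposal is correct and follows the same route as the paper: dualize via $\|v\|=\sup_{x^*\in S^*}|\langle v,x^*\rangle|$, extract a weak* cluster point of the near-maximizers by Banach--Alaoglu, and place that cluster point in $\bigcap_n T^{n*}S^*$ (via nestedness and weak* compactness of each $T^{n*}S^*$) or in $I$ (via the telescoping identity for $A_n^*$). The only item you omit is the existence of the Ces\`aro limit $\lim_n\frac{1}{n}\bigl\|\sum_{i=1}^n T^ix\bigr\|$, which the paper obtains from the subadditivity $\bigl\|\sum_{i=1}^{n+k}T^ix\bigr\|\leq\bigl\|\sum_{i=1}^{n}T^ix\bigr\|+\bigl\|\sum_{i=1}^{k}T^ix\bigr\|$ and Fekete's lemma; otherwise your use of subnets rather than subsequences is in fact more careful than the paper's own phrasing.
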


\begin{proof}
    Let $x\in B$ be fixed. The sequence $\ler{\|T^nx\|}_{n=1}^\infty$ of non-negative real numbers is decreasing and hence convergent.
    For every positive integer $m$, we have
    \[
    \|T^mx\|=\sup_{x^*\in S^*}|\langle T^mx,x^*\rangle|
    =\sup_{x^*\in T^{*m}S^*}|\langle x,x^*\rangle|
    \geq\sup\lers{|\langle x,x^*\rangle| \, : \, x^*\in\bigcap_{n\geq1} T^{*n}S^* },
    \]
    and hence
    \[
    \lim_{n\to\infty}\|T^nx\| \geq \sup\lers{|\langle x,x^*\rangle| \, : \, x^*\in\bigcap_{n\geq1}T^{n*}S^*}.
    \]

    To show the opposite inequality, suppose that $\varepsilon>0$ is given. For every positive integer $m$, there exists $x^*_m\in S^*$ such that
    \[
    \|T^mx\|-\varepsilon\leq|\langle T^mx,x^*_m\rangle|=|\langle x,T^{*m}x^*_m\rangle|.
    \]
    Let $y^*\in S^*$ be an accumulation point of the sequence $\ler{T^{*n}x^*_n}_{n=1}^\infty$ with respect to the weak* topology on $B^*$, in which $S^*$ is compact by Alaoglu's theorem. $T^*$ is continuous, so $T^{*m}S^*$ is compact for every $m$.
    Since $T^{*(m+1)}S^*\subseteq T^{*m}S^*$, we therefore have that $y^*\in\bigcap_{n\geq1}T^{*n}S^*$.
    This means that there is a subsequence $(n_i)_{i=1}^\infty$ such that
    \[
    |\langle x,y^*\rangle|
    =\lim_{i\to\infty}\left|\langle x,T^{*n_i}x^*_{n_i}\rangle\right|\geq\lim_{n\to\infty}\|T^nx\|-\varepsilon,
    \]
    which concludes the proof of the first part of the theorem.

    To prove the second part of the theorem, observe that since $T$ is a linear contraction,
    \[
    \left\|\sum_{i=1}^{n+k}T^ix\right\|
    \leq\left\|\sum_{i=1}^{n}T^ix\right\|+\left\|\sum_{i=1}^{k}T^ix\right\|
    \]
    holds for all $n$ and $k$, implying that the sequence $\ler{\frac{1}{n}\left\|\sum_{i=1}^{n}T^ix\right\|}_{n=1}^\infty$ converges.
    If $x^*\in S^*\cap I$, then for every $m$,
    \[
    |\langle x,x^*\rangle|=\left|\left\langle \frac{1}{m}\sum_{i=1}^m T^i x,x^*\right\rangle\right|
    \leq\frac{1}{m}\left\|\sum_{i=1}^mT^ix\right\|,
    \]
    and so
    \[
    \lim_{n\to\infty}\frac{1}{n}\left\|\sum_{i=1}^nT^ix\right\|\geq\sup\lers{|\langle x,x^*\rangle| \, : \, x^*\in S^*\cap I}.
    \]

    For the other direction, let $\varepsilon>0$ be fixed.
    For every $m$, there is $x^*_m\in S^*$ such that
    \[
    \frac{1}{m}\left\|\sum_{i=1}^mT^ix\right\|-\varepsilon\leq\left|\left\langle x,\frac{1}{m}\sum_{i=1}^mT^{*i}x^*_m\right\rangle\right|.
    \]
    Let $z^*\in S^*$ be an accumulation point of the sequence $\ler{\frac{1}{n}\sum_{i=1}^nT^{*i}x^*_n}_{n=1}^\infty$ with respect to the weak* topology on $B^*$.
    For every $y\in B$, the number $\left|\langle y-Ty,z^*\rangle\right|$ is an accumulation point of the sequence $\ler{\left|\left\langle y-Ty,\frac{1}{n}\sum_{i=1}^n T^{*i}x^*_n\right\rangle\right|}_{n=1}^\infty$,
    but
    \[
    \left|\left\langle y-Ty,\frac{1}{m}\sum_{i=1}^m T^{*i}x^*_m\right\rangle\right|
    =\left|\left\langle y,\frac{1}{m} T^{*}x^*_m\right\rangle-\left\langle y,\frac{1}{m} T^{*(m+1)}x^*_m\right\rangle\right|
    \leq\frac{2}{m}\|y\|
    \]
    for every positive integer $m$, and so we must have
    $0=\left|\langle y-Ty,z^*\rangle\right|=\left|\langle y,z^*\rangle-\langle y,T^*z^*\rangle\right|$
    and therefore $z^*\in I$.
    There is therefore a subsequence $(n_j)_{j=1}^\infty$ such that
    \[
    |\langle x,z^*\rangle|=\lim_{j\to\infty}\left|\left\langle x,\frac{1}{n_j}\sum_{i=1}^{n_j}T^ix^*_{n_j}\right \rangle\right|\geq\lim_{n\to\infty}\frac{1}{n}\left\|\sum_{i=1}^nT^ix\right\|-\varepsilon,
    \]
    concluding the result.
\end{proof}

Let now $(E,\mathcal{F})$ be a measurable space, and $P\colon E\times\mathcal{F}\lora[0,1]$ a Markov kernel on $(E,\mathcal{F})$. Abusing notation, $P$ is also a contraction on $\mathcal{M}$, the Banach space of signed measures on $(E,\mathcal{F})$ of finite total variation, sending $\mu$ to $\mu P$ given by
\[
(\mu P) (A) = \int_E P(x,A) \, d\mu(x).
\]

The dual $\mathcal{M}^*$ of $\mathcal{M}$ contains the set $\ell^\infty(E,\mathcal{F})$ of bounded $\mathcal{F}$-measurable functions on $E$ equipped with the supremum norm, and the dual contraction $P^*$ on $\ell^\infty(E,\mathcal{F})$ is the averaging operator
\[
P^*\colon f \mapsto \ler{x \mapsto \int_Ef(y) \, dP_x(y)}.
\]
The theorem above thus immediately tells us that
    for all signed measures $\kappa\in\mathcal{M}$ on $E$,
    \[
    \lim_n\|\kappa P^n\|\geq\sup\lers{\int_E f \, d\kappa : f\in D^1(P)}
    \]
    and
    \[
    \lim_n\frac{1}{n}\left\|\sum_{i=1}^n\kappa P^n\right\|\geq\sup\lers{\int_E h \, d\kappa : h\in H(P), \|h\|_\infty\leq1},
    \]
    but the inequality can be strengthened to equality if we choose $B$ and $B^*$ less naively.

\begin{proof}[Proof of Theorem \ref{thm:Derriennic}]
Let a signed measure $\kappa\in\mathcal{M}$ on $E$ be fixed. Let $m$ be a non-negative $\sigma$-finite measure on $E$ such that $\kappa\ll m$ and $mP\ll m$. Such $m$ always exists -- one can take for example $m=\sum_{n=0}^\infty\ler{\kappa^+P^n+\kappa^-P^n}$, where $\kappa=\kappa^+-\kappa^-$ is the Hahn decomposition of $\kappa$. Then we can apply the theorem above to $B=L^1(m)$, $B^*=L^\infty(m)$, with the contraction $T$ on $B$ being
\begin{align*}    
T\colon L^1(m) & \to L^1(m)\\
\frac{d\mu}{dm}&\mapsto \frac{d\mu P}{dm},
\end{align*}
where we used the fact that every function in $L^1(m)$ is the Radon-Nikodym derivative of some signed measure which is absolutely continuous with respect to $m$. The map $T$ is well-defined thanks to that $mP\ll m$.
The dual of $T$ is
\begin{align*}
T^*\colon L^\infty(m)&\to L^\infty(m) \\
f &\mapsto \ler{x\mapsto\int_E f(y) \, dP_x(y)}.
\end{align*}
Specifically, considering $\frac{d\kappa}{dm}\in L^1(m)$, the theorem tells us that
\begin{align*}
    \lim_{n\to\infty}\left\|\frac{d\kappa P^n}{dm}\right\|_1=\lim_{n\to\infty}\left\|T^n\frac{d\kappa}{dm}\right\|_1&=\sup\lers{\left|\left\langle\frac{d\kappa}{dm},g\right\rangle\right| : g\in\bigcap_{n\geq1}T^{*n}L^\infty_{[-1,1]}(m)}\\
    &=\sup\lers{\int_E g \, d\kappa : g\in\bigcap_{n\geq1}T^{*n}L^\infty_{[-1,1]}(m)}
\end{align*}
and
\begin{align*}
    \lim_n\frac{1}{n}\left\|\sum_{i=1}^n\frac{d\kappa P^i}{dm}\right\|_1=\lim_n\frac{1}{n}\left\|\sum_{i=1}^nT^i\frac{d\kappa}{dm}\right\|_1&=\sup\lers{\left|\left\langle\frac{d\kappa}{dm},g\right\rangle\right| : \|g\|_\infty\leq1, T^{*}g=g\in L^\infty(m)}\\
    &=\sup\lers{\int_E g \, d\kappa : Pg=g\in L^\infty_{[-1,1]}(m)}.
\end{align*}

But by Proposition 2 in~\cite{Derriennic}, the suprema, which we are taking over $H_m$ and $D_m^1$ above are equal to those over $H$ and $D^1$, so since
$\|\kappa P^i\|_{TV}=\left\|\frac{d\kappa P^i}{dm}\right\|_1$ and
$\left\|\sum_{i=1}^n\kappa P^i\right\|_{TV}=\left\|\frac{d\sum_{i=1}^n\kappa P^i}{dm}\right\|_1=\left\|\sum_{i=1}^n\frac{d\kappa P^i}{dm}\right\|_1$ for every $n\geq1$, we are done.
\end{proof}

\end{appendices}

\newpage

\printbibliography

\end{document}